\newtheorem{theorem}{Theorem}[section]
\newtheorem{lemma}[theorem]{Lemma}
\newtheorem{proposition}[theorem]{Proposition}
\newtheorem{corollary}[theorem]{Corollary}
\newtheorem{predefinition}[theorem]{Definition}
\newtheorem{preremark}[theorem]{Remark}
\newenvironment{remark}{\begin{preremark}\rm}{\end{preremark}}
\newtheorem{prenotation}[theorem]{Notation}
\newenvironment{notation}{\begin{prenotation}\rm}{\end{prenotation}}
\newtheorem{preexample}[theorem]{Example}
\newenvironment{example}{\begin{preexample}\rm}{\end{preexample}}
\newtheorem{prequestion}[theorem]{Question}
\newcommand \CO {{\cal O}}
\newcommand \ZZ {{\mathbb Z}}
\newcommand \NN {{\mathbb N}}
\newcommand  \FF {{\mathbb F}}
\newcommand \Aut {\mathop{\rm Aut}}
\begin{document}

\begin{frontmatter}
\title{Wild cyclic-by-tame extensions}

\author[Penn]{Andrew Obus\corref{cor} \fnref{NDSEG}}
\ead{obusa@math.upenn.edu}
\author[Colostate]{Rachel Pries\fnref{nsf}}
\ead{pries@math.colostate.edu}
\fntext[NDSEG]{Supported by an NDSEG Graduate Research Fellowship}
\fntext[nsf]{Partially supported by NSF grant DMS-07-01303}
\cortext[cor]{Corresponding author}

\address[Penn]{Department of Mathematics, University of Pennsylvania, 209 S. 33rd Street, Philadelphia, PA, 19104}
\address[Colostate]{Department of Mathematics, Colorado State University, 101 Weber Building, Fort Collins, CO, 80523}

\begin{abstract}
Suppose $G$ is a semi-direct product of the form $\ZZ/p^n \rtimes \ZZ/m$ where $p$ is prime and
$m$ is relatively prime to $p$.  
Suppose $K$ is a complete local field of characteristic $p > 0$ with algebraically closed residue field.
The main result states necessary and sufficient conditions on the ramification filtrations that occur for
wildly ramified $G$-Galois extensions of $K$.
In addition, we prove that there exists a parameter space for $G$-Galois extensions of $K$
with given ramification filtration, and we calculate its dimension in terms of the ramification filtration.
We provide explicit equations for wild cyclic extensions of $K$ of degree $p^3$. 
\end{abstract}
\begin{keyword} 
Local field, Galois, ramification filtration
\MSC 14H30 \sep 11S15
\end{keyword} 
\end{frontmatter}

\section{Introduction}

This paper is about wildly ramified Galois extensions of a complete local field $k((t))$ where
$k$ is an algebraically closed field of characteristic $p > 0$.
We prove that the lower jumps of the ramification filtration of a Galois extension of $k((t))$ 
with group $\ZZ/p^n \rtimes \ZZ/m$ are all congruent modulo $m$, Proposition \ref{Pcongruence}.  
We also prove that one can dominate a given Galois extension having
group $\ZZ/p^{n-1} \rtimes \ZZ/m$ by a Galois extension having group $\ZZ/p^n \rtimes \ZZ/m$,
with control over the last jump in the ramification filtration, 
Proposition \ref{Pdominate}.
Together with well-known results about ramification filtrations of Galois extensions with group $\ZZ/p^n$ \cite{Schmid}, 
this yields (see Theorem \ref{Tnecsuff}):

\begin{theorem} \label{Tmaintheorem}
Let $G$ be a semi-direct product of the form $\ZZ/p^n \rtimes \ZZ/m$ where $p \nmid m$.
Let $\sigma \in G$ have order $p^n$ and let $m'=|{\rm Cent}_G(\sigma)|/p^n$.
A sequence $u_1 \leq \cdots \leq u_n$ of rational numbers
occurs as the set of positive breaks in the upper numbering of the ramification filtration
of a $G$-Galois extension of $k((t))$ if and only if:
\begin{description}
\item{(a)} $u_i \in \frac{1}{m} \NN$ for $1 \leq i \leq n$;
\item{(b)} $\gcd(m, mu_1)=m'$;
\item{(c)} $p \nmid mu_1$ and, for $1 < i \leq n$, either $u_i=pu_{i-1}$ or both $u_i > pu_{i-1}$ and $p \nmid mu_i$;
\item{(d)} and $mu_i \equiv mu_1 \bmod m$ for $1 \leq i \leq n$.
\end{description}
\end{theorem}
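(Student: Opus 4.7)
The plan is to prove the theorem by splitting it into necessity of (a)--(d) for a given $G$-Galois extension and sufficiency via an inductive construction, using Schmid's classification of $\ZZ/p^n$-extensions together with Propositions \ref{Pcongruence} and \ref{Pdominate} as the two main inputs.

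For necessity, I would begin by translating between the upper breaks $u_i$ of $G$ and the lower breaks $b_1 \le \cdots \le b_n$ of the normal wild subgroup $P = \ZZ/p^n$. Since lower numbering behaves well under subgroups, these $b_i$ coincide with the lower breaks of the abelian subextension $L/L^P$. A direct computation with the Herbrand function $\varphi_G$, noting that $[G_0:G_t]$ equals $m$ for $0<t\le b_1$ and $mp^{i-1}$ for $b_{i-1}<t\le b_i$, gives $mu_1 = b_1$ and $m(u_i - u_{i-1}) = (b_i - b_{i-1})/p^{i-1}$. Equivalently, $mu_i$ equals the upper break $v_i$ of $P$ inside $L/L^P$. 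Condition (a) is then Hasse--Arf applied to the abelian extension $L/L^P$. Condition (c) is precisely Schmid's classification \cite{Schmid} for $\ZZ/p^n$-extensions rewritten in the $v_i = mu_i$. Condition (d) follows from Proposition \ref{Pcongruence} together with $\gcd(p,m)=1$, since $v_i - v_1 = \sum_{j=2}^i (b_j-b_{j-1})/p^{j-1}$ is divisible by $m$ as soon as each $b_j - b_1$ is. Condition (b) comes from the tame action on the first graded piece of $P$: fixing a uniformizer $\pi$ of $L$ and a generator $\sigma$ of $P$, one checks that each $\tau \in \ZZ/m$ acts on $\pi$ by an $m$-th root of unity $\zeta$, and that $\tau$ commutes with $\sigma$ modulo $P_{>b_1}$ exactly when $\zeta^{b_1}=1$. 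Counting such $\tau$ gives $|\ker(\ZZ/m\to \Aut(P))| = \gcd(m,b_1) = \gcd(m,mu_1)$, which must equal $m'$.

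For sufficiency, I would induct on $n$. The case $n=0$ is trivial: any tame totally ramified $\ZZ/m$-extension of $k((t))$ works. For the inductive step, assume $u_1,\ldots,u_n$ satisfy (a)--(d); the truncated sequence $u_1,\ldots,u_{n-1}$ again satisfies (a)--(d) with the same $m'$, so by the inductive hypothesis it is realized by some $\ZZ/p^{n-1} \rtimes \ZZ/m$-Galois extension $L'/K$. Proposition \ref{Pdominate} then produces a dominating $G$-Galois extension $L/K$ whose last upper break is $u_n$; the compatibility conditions on $u_n$ required to invoke that proposition are exactly (a), (c), (d), together with the structural input (b) that has already been fixed in the base case.

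The principal obstacle in this strategy lies outside this theorem, in its main input Proposition \ref{Pdominate}, whose proof must explicitly build a degree-$p$ Artin--Schreier--Witt layer over a prescribed $\ZZ/p^{n-1}\rtimes\ZZ/m$-tower while preserving both the semi-direct product structure and the prescribed last ramification jump. Within the present proof, the most delicate bookkeeping is the necessity of (b), which couples the analytic data $b_1$ to the group-theoretic invariant $m'$ via the character of the tame conjugation action, and the verification that condition (d) on upper breaks transfers cleanly to and from the lower-break congruence provided by Proposition \ref{Pcongruence}.
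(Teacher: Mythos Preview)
Your proposal follows essentially the same route as the paper: necessity via Hasse--Arf, Schmid, Proposition~\ref{Pcongruence}, and the tame-action computation behind Lemma~\ref{Lgcd}(i); sufficiency via Proposition~\ref{Pdominate} applied inductively. The necessity argument is fine, and your derivations of (a)--(d) match the paper's, with the Herbrand bookkeeping you sketch being exactly Lemma~\ref{Lcomparejumps}.

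There is one genuine gap in the sufficiency induction. You start at $n=0$ and claim that ``any tame totally ramified $\ZZ/m$-extension works,'' then invoke Proposition~\ref{Pdominate} to pass to $n=1$. But Proposition~\ref{Pdominate} as stated presupposes $n\ge 1$: its hypotheses reference $u_1$ and $u_{n-1}$, and its proof (via Proposition~\ref{Paction}) relies on the identity $\zeta^{mu_1}=\alpha^{-1}$, which is already encoded in the $G/H_{n-1}$-structure of $L_{n-1}/K$ once $n\ge 2$. For $n=1$ the quotient $G/H_0\cong\ZZ/m$ carries no information about $\alpha$, so an arbitrary $\ZZ/m$-extension cannot be fed into Proposition~\ref{Pdominate} to produce the prescribed semi-direct product. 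This is precisely where condition~(b) does real work: one must choose the identification $c(x)=\zeta^{\beta}x$ so that $\zeta^{\beta\, mu_1}=\alpha^{-1}$, and the existence of such $\beta$ is equivalent to $\zeta^{mu_1}$ and $\alpha^{-1}$ having the same order $m/m'$ in $k^*$, i.e.\ to $\gcd(m,mu_1)=m'$. The paper therefore builds the $n=1$ case by hand (using \cite[Lemma~1.4.1]{Pr:deg}) and only then begins the induction with Proposition~\ref{Pdominate}. Your remark that (b) is ``fixed in the base case'' is the right instinct, but your base case $n=0$ does not actually fix it; move the base to $n=1$ and carry out this matching explicitly.
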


In the first author's doctoral thesis, Theorem \ref{Tmaintheorem} yields restrictions on the stable reduction of 
certain branched covers of the projective line.

Our other main result, Theorem \ref{Tparameter}, states that, given a group $G$ and
a ramification filtration $\eta$ satisfying conditions (a)-(d) as in Theorem \ref{Tmaintheorem}, 
there exists a parameter space ${\mathcal M}_\eta$ whose $k$-points
are in natural bijection with isomorphism classes of $G$-Galois extensions of $k((t))$ having ramification filtration $\eta$.
We calculate the dimension of ${\mathcal M}_\eta$ in terms of the upper jumps of $\eta$.

Here is the paper's outline: in Section \ref{Sbackground} we introduce the framework of study, 
including ramification filtrations and field theory;
Section \ref{Sallcyclic} contains several structural descriptions of cyclic $p$-group extensions;
in Section \ref{SCycByTame}, we prove results about tame actions on cyclic extensions;
and the main results on ramification filtrations and parameter spaces for $G$-Galois extensions
appear in Section \ref{Smain}.

Our original motivation for this topic was to find explicit equations for $\ZZ/p^3$-Galois extensions of $k((t))$, 
see Section \ref{Sp3}.
Such equations are useful and are difficult to find in the literature.
For example, in \cite[II, Lemma 5.1]{GM},
the authors use equations for $\ZZ/p^2$-Galois extensions
in order to prove a case of Oort's Conjecture,
namely, that every $\ZZ/p^2$-Galois extension of $k((t))$ lifts to characteristic $0$ \cite[Thm.\ 2]{GM}.

Similar results for elementary abelian $p$-group extensions are in \cite{Deo}.

We thank D. Harbater and an anonymous reader for help with Proposition \ref{Pcongruence}, 
and J. Achter, S. Corry, G. Elder, M. Matignon, and the referee for useful comments.

\section{Framework of study} \label{Sbackground}

This section contains background on extensions of complete local fields and ramification filtrations
and introduces the situation studied in this paper, in which 
the Galois group is a semi-direct product of the form $\ZZ/p^n \rtimes \ZZ/m$.

\subsection{Extensions of complete local fields} \label{Slocal}

Let $k$ be an algebraically closed field of characteristic $p > 0$.
We fix a compatible system of roots of unity of $k$.
In particular, this fixes an isomorphism $\ZZ/p \simeq \FF_p$
and fixes a primitive $m$th root of unity $\zeta$ in $k$.
Let $R$ be an equal characteristic complete discrete valuation ring with residue field $k$ and fraction field $K$.
Then $R \simeq k[[t]]$ and $K \simeq k((t))$ for some uniformizing parameter $t$.

Suppose $L/K$ is a separable Galois field extension with group $G$.
Let $S$ be the integral closure of $R$ in $L$.
Then $S/R$ is a Galois extension of rings with group $G$ which is totally ramified over the prime ideal $(t)$.

This type of field extension arises in the following context.
Suppose $\phi:Y \to X$ is a Galois cover of smooth $k$-curves.  
Suppose $y \in Y$ is a ramified point with inertia group $G$.  
Consider the complete local rings $S=\hat{\CO}_{Y,y}$ and $R=\hat{\CO}_{X,\phi(y)}$.
Then $S/R$ is a Galois extension of rings with group $G$ which is totally ramified over the unique valuation of $R$
as described in the preceding paragraph.

For a Galois extension $L/K$ as above, the group $G$ is a semi-direct product of the 
form $P \rtimes \ZZ/m$ where $P$ is a $p$-group and $p \nmid m$ \cite[IV, Cor.\ 4]{Se:lf}.
Throughout the paper, we assume that the subgroup $P$ is cyclic.

\subsection{Subgroups of a semi-direct product} \label{Sgroup}

Suppose $G$ is a semi-direct product of the form $P \rtimes \ZZ/m$ where $P \simeq \ZZ/p^n$
and $p \nmid m$.  Let $\sigma$ be a chosen generator of $P$.
Let $c$ be a chosen element of order $m$ in $G$ and let $M=\langle c \rangle$.
Let $m'=|{\rm Cent}_G(\sigma)|/p^n$.  
In other words, $m'=\#\{g \in M \ | \ g\sigma g^{-1} = \sigma\}$.

For $0 \leq i \leq n$, the element $\sigma_i:=\sigma^{p^i}$ has order $p^{n-i}$
and $H_i:=\langle \sigma_i \rangle$ is the unique subgroup of order $p^{n-i}$ in $G$.
Then $\{{\rm id}\}=H_n \subset H_{n-1} \subset \cdots \subset H_0=P$.

The semi-direct product is determined by the conjugation action of $M$ on $P$.
Since $c \sigma c^{-1}$ also generates $P$, then $c \sigma c^{-1}=\sigma^{\alpha'}$ for some
integer $\alpha'$ such that $1 \leq \alpha' < p^n$ and $p \nmid \alpha'$.
The action of $c$ stabilizes $H_i$.  Let $J_i:=(H_{i-1}/H_i) \rtimes M$.

\begin{lemma} \label{Lalpha}
\begin{description}
\item{(i)} The value of $\alpha'$ does not depend on the choice of generator of $P$;
\item{(ii)} The value of $\alpha'$ depends on the choice of generator of $M$ as follows;
if $c_0=c^{\beta}$ for some integer $\beta$, 
then $\alpha_0' \equiv (\alpha')^\beta \bmod p^n$.
\end{description}
\end{lemma}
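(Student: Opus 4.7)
The plan is to prove both parts by direct computation, exploiting the fact that conjugation by a fixed element of $G$ is a group automorphism of $P$ and hence commutes with the operation of raising to a power.

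For part (i), I would take an arbitrary generator $\tau$ of $P$ and write $\tau = \sigma^k$ for some integer $k$ with $\gcd(k,p)=1$, which is possible since $P \simeq \ZZ/p^n$ is cyclic. Then I would compute $c\tau c^{-1}$ by pulling the exponent $k$ through the conjugation and applying the defining relation $c\sigma c^{-1} = \sigma^{\alpha'}$, concluding that $c\tau c^{-1} = \tau^{\alpha'}$. This shows $\tau$ produces the same exponent as $\sigma$, so $\alpha'$ is independent of the chosen generator of $P$.

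For part (ii), I would argue by induction on $\beta$ that $c^\beta \sigma c^{-\beta} = \sigma^{(\alpha')^\beta}$. The base case $\beta = 1$ is the defining relation for $\alpha'$. The inductive step uses the identity $c^{\beta+1}\sigma c^{-(\beta+1)} = c(c^\beta \sigma c^{-\beta})c^{-1}$, after which the inductive hypothesis together with the same ``conjugation commutes with powers'' observation used in (i) yields $\sigma^{(\alpha')^{\beta+1}}$. Substituting $c_0 = c^\beta$ then gives $\alpha_0' \equiv (\alpha')^\beta \bmod p^n$.

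There is essentially no serious obstacle here; both assertions are formal consequences of the fact that $\sigma \mapsto c\sigma c^{-1}$ is an element of $\Aut(P)$, which is determined by its value on any single generator. The one minor point worth noting is that in part (ii) one only needs to consider $\beta$ modulo $m$, since $c$ has order $m$; the inductive argument nevertheless works uniformly for all positive integers $\beta$, and negative $\beta$ can be handled by passing to inverses or by choosing a positive representative modulo $m$.
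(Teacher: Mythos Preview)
Your proposal is correct and follows essentially the same argument as the paper: for (i) the paper writes $\tau=\sigma^\gamma$ and computes $c\tau c^{-1}=(c\sigma c^{-1})^\gamma=(\sigma^{\alpha'})^\gamma=\tau^{\alpha'}$, and for (ii) it states ``By induction, $c^i \sigma c^{-i}=\sigma^{(\alpha')^i}$,'' exactly as you outline. Your write-up simply supplies more detail on the inductive step.
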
 

\begin{proof}
\begin{description}
\item{(i)} If $\tau=\sigma^\gamma$, 
then $c\tau c^{-1}=(c\sigma c^{-1})^\gamma=(\sigma^{\alpha'})^\gamma=\tau^{\alpha'}$.
\item{(ii)} By induction, $c^i \sigma c^{-i}=\sigma^{(\alpha')^i}$.  Thus $c_0 \sigma c_0^{-1}=\sigma^{\alpha'_0}$.
\end{description}
\end{proof}

\begin{lemma} \label{Lgroup}
The groups $J_i$ are canonically isomorphic for $1 \leq i \leq n$.
\end{lemma}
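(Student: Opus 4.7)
My plan is to construct a canonical isomorphism $\phi_i : J_i \to J_{i+1}$ for each $1 \le i \le n-1$, and then compose these. Since $H_i = \langle \sigma^{p^i} \rangle$ is the unique subgroup of $P$ of order $p^{n-i}$, the chain $\{\mathrm{id}\} = H_n \subset \cdots \subset H_0 = P$ is characteristic in $G$, so each subquotient $H_{i-1}/H_i$ carries a well-defined conjugation action of $M$ independent of how $\sigma$ was chosen.

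The key observation is that the $p$-th power map $P \to P$ sends $H_{i-1}$ into $H_i$ and $H_i$ into $H_{i+1}$, so it descends to a homomorphism $\bar\phi_i : H_{i-1}/H_i \to H_i/H_{i+1}$ given by $xH_i \mapsto x^p H_{i+1}$. Because $\sigma_{i-1} = \sigma^{p^{i-1}}$ is sent to $\sigma_i = \sigma^{p^i}$, which generates $H_i/H_{i+1}$, the map $\bar\phi_i$ is an isomorphism between cyclic groups of order $p$. This construction is canonical in that it invokes only the group structure on $P$ and makes no reference to a choice of generator.

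I would then verify $M$-equivariance: for $g \in M$ and $x \in H_{i-1}$, one has $g x^p g^{-1} = (g x g^{-1})^p$, so $\bar\phi_i$ intertwines the $M$-actions on the two cyclic factors. Extending by the identity on the $M$-factor produces a semi-direct product isomorphism $\phi_i : J_i \to J_{i+1}$, and composing these yields the desired canonical isomorphisms $J_i \cong J_j$ for all $1 \le i, j \le n$.

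I do not foresee a serious difficulty here; the argument is essentially bookkeeping around the cyclic structure of $P$. The main point to keep straight is the canonicity claim, and using the intrinsic $p$-th power map (rather than a generator-based recipe such as $\sigma_{i-1} \mapsto \sigma_i$) makes the independence of choices automatic, since every step is defined in terms of the group $G$ and the characteristic filtration alone.
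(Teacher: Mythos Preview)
Your argument is correct and amounts to the same proof as the paper's: the paper simply computes that $c\sigma_i c^{-1}=\sigma_i^{\alpha'}$ for all $i$, so the $M$-action on every subquotient $H_{i-1}/H_i$ is by the same exponent $\alpha'\bmod p$, whence the semi-direct products coincide. Your use of the $p$-th power map is just an intrinsic repackaging of the same identification (it sends the class of $\sigma_{i-1}$ to that of $\sigma_i$), and your $M$-equivariance check $g x^p g^{-1}=(gxg^{-1})^p$ is exactly the observation $c\sigma^p c^{-1}=(\sigma^p)^{\alpha'}$ used in the paper.
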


\begin{proof}
The groups $J_i$ are semi-direct products of the form $\ZZ/p \rtimes \ZZ/m$.  
Thus it suffices to show that the action of $c$ on the equivalence class of $\sigma_{i-1}$ modulo 
$\langle \sigma_i \rangle$ is the same for $1 \leq i \leq n$. 
Note that $c \sigma^p c^{-1}=(\sigma^p)^{\alpha'}$.
Thus $c \sigma_i c^{-1} = \sigma_i^{\alpha'}$.    
\end{proof}

The residue of $\alpha'$ modulo $p$ can be identified with an element $\alpha \in (\ZZ/p)^*$
and thus with an element $\alpha \in \FF^*_p$.
Also $m/m'$ is the order of $\alpha$ in $\FF_p^*$.

\subsection{Towers of fields} \label{Sfields}

Suppose $L/K$ is a separable Galois extension 
whose group $G$ is of the form $\ZZ/p^n \rtimes \ZZ/m$ with $p \nmid m$.
We fix an identification of $\Aut(L/K)$ with $G$ and indicate this by writing that $L/K$ is a
{\it $G$-Galois extension}.

Consider the fixed fields $L_i=L^{H_i}$ and $K_i=L^{H_i \rtimes M}$ for $0 \leq i \leq n$.
So, $L_n=L$ and $K_0=K$.
Let $v_i$ be the natural valuation on $L_i$.
Let $\Theta_i$ be the integral closure of $R$ in $L_i$.  
Then $L/L_i$ is an $H_i$-Galois extension
and $L_i/L_0$ is a $P/H_i$-Galois extension.
Also $L_i/K_{i-1}$ is a $J_i$-Galois extension.
This yields a tower of fields:

\[
\xymatrix{
L_0  \ar@{^{(}->}[r]^{\ZZ/p} & L_1  \ar@{^{(}->}[r]^{\ZZ/p} & \cdots  \ar@{^{(}->}[r]^{\ZZ/p} & L_{n-1}  \ar@{^{(}->}[r]^{\ZZ/p} &  L
\\
K_0  \ar@{^{(}->}[r]  \ar@{^{(}->}[u]^{\ZZ/m} & K_1  \ar@{^{(}->}[r]  \ar@{^{(}->}[u] & 
\cdots \ar@{^{(}->}[r] & K_{n-1}  \ar@{^{(}->}[r]  \ar@{^{(}->}[u]  & K_n  \ar@{^{(}->}[u]
}
\]

By Kummer theory, there exists $x \in L_0$ such that $L_0 \simeq K[x]/(x^m - 1/t)$. 
After choosing $c \in G$ such that $c(x)=\zeta x$, one can determine the values of 
$\alpha'$ and $\alpha$ for the extension $L/K$.

\subsection{Ramification filtrations} \label{Sram}

Here is a brief review of the theory of ramification filtrations from \cite[IV]{Se:lf}.
Consider the natural valuation $v=v_n$ on $L$
and a uniformizing parameter $\pi \in L$.
For $r \in \NN$, let $I_{r}$ be the $r$th ramification group in the lower numbering for the extension $L/K$.
In other words, $I_{r}$ is the normal subgroup of all $g \in G$
such that $v(g(\pi)-\pi) \geq r+1$.

The ramification filtration is important because it determines the degree $\delta$ of the different of $S/R$.
Namely, by \cite[IV, Prop.\ 4]{Se:lf}, $\delta= \sum_{r \geq 0} (|I_r|-1)$.
If $\phi:Y \to X$ is a cover of smooth projective connected $k$-curves,
the genus of $Y$ can by found using the Riemann-Hurwitz formula
\cite[IV, Cor. 2.4]{Har}
and this formula relies on the degree of the different at each ramification point of $\phi$.

Let $g \in G$ with $g \not = 1$.
The {\it lower jump} for $g$ is the non-negative integer $j$ so that $v(g(\pi)-\pi)= j+1$.
Then $g \in I_{j}$ and $g \not \in I_{j+1}$.
By \cite[IV, Prop.\ 11]{Se:lf}, $p \nmid j$ for any positive lower jump $j$.
If $|P|=p^n$, then there are $n$ positive indices $j_1 \leq \cdots \leq j_n$ at which there is a break in
the ramification filtration in the lower numbering, which are called the {\it lower jumps} of $L/K$.

There is also a ramification filtration $I^\ell$ in the upper numbering.  
The {\it upper jumps} of $L/K$ are the positive breaks $u_1 \leq \cdots \leq u_n$ 
in the ramification filtration in the upper numbering.
The lower numbering is stable for subextensions \cite[IV, Prop.\ 2]{Se:lf} and the upper numbering is stable
for quotients \cite[IV, Prop.\ 14]{Se:lf}.
Using Herbrand's formula \cite[IV, \S3]{Se:lf}, one can translate between the two
ramification filtrations: letting $j_0=u_0=0$, then $u_{i}-u_{i-1}=(j_{i}-j_{i-1})/p^{i-1}m$ for $1 \leq i \leq n$.

\section{Wild cyclic extensions} \label{Sallcyclic}

In this section, we describe the equations and ramification filtration of the $\ZZ/p^n$-Galois subextension $L/L_0$.  
The material in this section is mostly known, but it is all necessary for later results in the paper.

\subsection{Cyclic towers of Artin-Schreier extensions} \label{Scyclic}

\begin{lemma} \label{Ljump}    
The $i$th lower jump $j_i$ of $L/K$ equals the lower jump of $L_i/L_{i-1}$.
\end{lemma}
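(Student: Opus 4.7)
The plan is to compute the ramification invariant $i_{L_i/L_{i-1}}(\bar\sigma) := v_{L_i}(\bar\sigma(\pi_i)-\pi_i)$ for a generator $\bar\sigma$ of $H_{i-1}/H_i$ and compare it to $j_i+1$, since $L_i/L_{i-1}$ is $\ZZ/p$-Galois and its (unique) lower jump is $i_{L_i/L_{i-1}}(\bar\sigma)-1$. The key tool is the standard quotient formula of Serre \cite[IV, Prop.\ 3]{Se:lf}, which reads
\[
e(L/L_i)\cdot i_{L_i/L_{i-1}}(\bar\sigma) \;=\; \sum_{h\in H_i} i_{L/L_{i-1}}(\sigma h)
\]
for any lift $\sigma\in H_{i-1}$ of $\bar\sigma$.

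First I would pin down the ramification groups of $L/K$ on the nose. Because $P$ is cyclic, the quotients $I_r/I_{r+1}$ are cyclic of order $1$ or $p$, so the $n$ positive breaks occur precisely at the subgroups $H_i$; setting $j_0=0$, this gives
\[
I_{j_{i-1}+1}=I_{j_{i-1}+2}=\cdots=I_{j_i}=H_{i-1},\qquad I_{j_i+1}=H_i,
\]
for $1\le i\le n$. Next, since the lower numbering is stable under subextensions \cite[IV, Prop.\ 2]{Se:lf}, intersecting with $H_{i-1}$ yields $I_{j_i}(L/L_{i-1})=H_{i-1}$ and $I_{j_i+1}(L/L_{i-1})=H_i$. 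Consequently, for every $\sigma\in H_{i-1}\setminus H_i$ one has $i_{L/L_{i-1}}(\sigma)=j_i+1$; and for every $h\in H_i$ the product $\sigma h$ is again in $H_{i-1}\setminus H_i$, so $i_{L/L_{i-1}}(\sigma h)=j_i+1$ as well.

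Plugging this into Serre's formula, together with $e(L/L_i)=|H_i|=p^{n-i}$, gives
\[
p^{n-i}\cdot i_{L_i/L_{i-1}}(\bar\sigma) \;=\; \sum_{h\in H_i}(j_i+1) \;=\; p^{n-i}(j_i+1),
\]
so $i_{L_i/L_{i-1}}(\bar\sigma)=j_i+1$, and the lower jump of $L_i/L_{i-1}$ is $j_i$, as claimed.

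I do not expect a serious obstacle: once the ramification breaks of $L/K$ are identified with the subgroups $H_i$, the remainder is bookkeeping. The only mild subtlety is to verify that the breaks fall at the $H_i$ in the expected order rather than skipping any of them, which is exactly why the cyclicity of $P$ (forcing each quotient $I_r/I_{r+1}$ to have order at most $p$) must be invoked.
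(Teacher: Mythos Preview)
Your proof is correct but takes a different route from the paper's. The paper argues more structurally: the lower jump of $\sigma_{i-1}$ in $L/K$ is $j_i$; this is preserved upon restriction to the subextension $L/L_{i-1}$ by \cite[IV, Prop.\ 2]{Se:lf}; since $j_i$ is the \emph{smallest} lower jump of $L/L_{i-1}$, it coincides with the smallest upper jump there; and upper numbering passes to the quotient $L_i/L_{i-1}$ by \cite[IV, Prop.\ 14]{Se:lf}, where upper and lower numbering agree because the group is $\ZZ/p$. You instead compute $i_{L_i/L_{i-1}}(\bar\sigma)$ directly via the transitivity formula \cite[IV, Prop.\ 3]{Se:lf}, which is the explicit identity underlying Herbrand's theorem. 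Your argument is slightly more hands-on but is self-contained once the ramification filtration of $L/K$ has been identified with the chain of $H_i$'s; the paper's argument is shorter but tacitly uses that $\varphi_{L/L_{i-1}}$ is the identity up to the first break, so that the smallest lower and upper jumps coincide. Both approaches invoke the cyclicity of $P$ in exactly the same way, namely to ensure that the breaks of $L/K$ fall precisely at the subgroups $H_0\supset H_1\supset\cdots\supset H_n$.
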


\begin{proof}  
The $i$th lower jump $j_i$ of $L/K$ is the lower jump of
the automorphism $\sigma_{i-1}$.
This is the same as the lower jump of $\sigma_{i-1}$ for the extension $L/L_{i-1}$ by \cite[IV, Prop.\ 2]{Se:lf}.
Since this is the smallest lower jump for the extension $L/L_{i-1}$, it also equals the upper jump of
$\sigma_{i-1}$ for $L/L_{i-1}$.  By \cite[IV, Prop.\ 14]{Se:lf}, 
this is then the same as the upper jump, and thus the lower jump, of $L_i/L_{i-1}$.
\end{proof}

\subsection{Witt Vectors and $p$-power cyclic extensions}\label{SWitt}

We recall some Witt vector theory.
Let $\wp$ be the operation ${\rm Fr} - {\rm Id}$ on Witt vectors, where ${\rm Fr}$ denotes Frobenius. 
An element $a$ of a field $F$ of characteristic $p$ 
is a $\wp${\it th power} in $F$ if the polynomial $z^p - z - a$ has a root in $F$.

By \cite[p.\ 331, Ex.\ 50]{La:Alg}, every Galois extension of $L_0 \cong k((x^{-1}))$
with group $\ZZ/p^n$ has Witt vector equations 
\begin{equation}\label{EWitt}
(y_1^p, \ldots, y_n^p) = (y_1, \ldots, y_n) +' (x_1, \ldots, x_n).
\end{equation}
where $x_i \in L_0$ for $1 \leq i \leq n$ such that $x_1$ is not a $\wp$th power in $L_0$ 
and where $+'$ denotes addition of Witt vectors:  
Moreover, there is a generator $\tau$ of $\ZZ/p^n$ such that the action of 
$\tau$ on Witt vectors is
\begin{equation}\label{Eaction}
\tau(y_1, \ldots, y_n)=(y_1, \ldots, y_n) +' (1, 0, \ldots, 0). 
\end{equation}

Modifying $(x_1, \ldots, x_n)$ by an element $w \in W^n(L_0)$, where $W^n$ is the $n$th truncation of the 
Witt vectors, changes the isomorphism class of the extension precisely when $w \not \in \wp(W^n(L_0))$.  
Thus, since $k$ is algebraically closed, one can choose $(x_1, \ldots, x_n)$ to be in {\it standard form}, 
i.e., $x_i \in k[x]$ and either $x_i =0$ or $x_i$ has no exponent divisible by $p$.  

To make (\ref{EWitt}) more explicit, for $0 \leq i \leq n-1$, let 
$W_i=\sum_{d=0}^i p^dX_{d+1}^{p^{i-d}}$ be the $i$th Witt polynomial, \cite[II, \S6]{Se:lf}.  
Define $S_i \in \ZZ[X_1, \ldots, X_{i+1}, Y_1, \ldots, Y_{i+1}]$ to be the unique formal polynomial such that
\begin{eqnarray*} 
\lefteqn{W_i(X_1, \ldots, X_{i+1}) + W_i(Y_1, \ldots, Y_{i+1}) =} \\
& & W_i(S_0(X_1, Y_1), S_1(X_1, X_2, Y_1, Y_2), \ldots, S_i(X_1, \ldots, X_{i+1}, Y_1, \ldots Y_{i+1})).
\end{eqnarray*}
The indexing of these variables is shifted by one from that of \cite[II, \S6]{Se:lf} in order
to be more consistent with notation in this paper.
By \cite[II, Thm.\ 6]{Se:lf}, the $S_i$ are well defined and have integer coefficients.  

\begin{lemma}\label{LWitt}
In $\ZZ[X_1, \ldots, X_i, Y_1, \ldots, Y_i]$, 
$$S_{i-1}(X_1, \ldots, X_{i}, Y_1, \ldots, Y_{i}) = X_{i} + Y_{i} + 
\sum_{d=1}^{i-1} \frac{1}{p^{i-d}} (X_{d}^{p^{i-d}} + Y_{d}^{p^{i-d}} - S_{d-1}^{p^{i-d}})$$
and the degree of every monomial of $S_{i-1}$ is congruent to one modulo $p-1$.
\end{lemma}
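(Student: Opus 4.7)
The plan is to derive the recursive formula directly from the defining identity of $S_{i-1}$, and then to establish the congruence modulo $p-1$ by induction on $i$.

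First I would write out the defining equation
$$W_{i-1}(X_1,\ldots,X_i)+W_{i-1}(Y_1,\ldots,Y_i)=W_{i-1}(S_0,S_1,\ldots,S_{i-1})$$
using the explicit shape $W_{i-1}(Z_1,\ldots,Z_i)=\sum_{e=1}^{i}p^{e-1}Z_e^{p^{i-e}}$. On the right-hand side, the variable $S_{i-1}$ appears only in the last summand, as $p^{i-1}S_{i-1}$. Isolating this term and dividing through by $p^{i-1}$ gives
$$S_{i-1}=\sum_{e=1}^{i}p^{e-i}\bigl(X_e^{p^{i-e}}+Y_e^{p^{i-e}}\bigr)-\sum_{f=0}^{i-2}p^{f-i+1}S_f^{p^{i-1-f}}.$$
The $e=i$ term of the first sum contributes $X_i+Y_i$; reindexing the second sum by $d=f+1$ and combining the two sums for $1\leq d\leq i-1$ produces exactly the claimed formula. (Integrality of the resulting expression is already guaranteed by \cite[II, Thm.\ 6]{Se:lf}, so no separate verification is needed.)

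For the degree statement, I would induct on $i$. The base case $S_0=X_1+Y_1$ consists of monomials of degree $1\equiv 1\pmod{p-1}$. For the inductive step, assume every monomial of $S_{d-1}$ has degree congruent to $1$ modulo $p-1$ for each $d<i$. Since $p\equiv 1\pmod{p-1}$, each exponent $p^{i-d}\equiv 1\pmod{p-1}$, so the monomials $X_d^{p^{i-d}}$ and $Y_d^{p^{i-d}}$ have degree $\equiv 1\pmod{p-1}$. Moreover, a monomial of $S_{d-1}^{p^{i-d}}$ is a product of $p^{i-d}$ monomials from $S_{d-1}$, each of degree $\equiv 1\pmod{p-1}$, so it too has degree $\equiv p^{i-d}\equiv 1\pmod{p-1}$. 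The inductive formula then exhibits $S_{i-1}$ as a $\ZZ$-linear combination of monomials all of whose degrees are $\equiv 1\pmod{p-1}$.

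The only subtle point is bookkeeping the shifted indexing between $W_{i-1}$ (which involves $X_1,\ldots,X_i$ but has top index $i-1$) and the Witt polynomials of \cite{Se:lf}; this is essentially a matter of careful reindexing and should be the only place where a misstep could occur, but it does not present any genuine mathematical difficulty.
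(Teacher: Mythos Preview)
Your argument is correct and matches the paper's approach: the paper derives the formula from the identity $\sum_{d=0}^{i-1} p^d S_d^{p^{i-1-d}} = \sum_{d=0}^{i-1} p^d(X_{d+1}^{p^{i-1-d}} + Y_{d+1}^{p^{i-1-d}})$, which is precisely the defining relation you unwind, and then appeals to induction for the degree claim exactly as you do. One cosmetic point: in the last step you call the displayed expression a ``$\ZZ$-linear combination,'' whereas the coefficients $1/p^{i-d}$ are only rational; the argument is unaffected since you only need that every monomial appearing has degree $\equiv 1\pmod{p-1}$, regardless of its coefficient.
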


\begin{proof} 
The equation follows from $\sum_{d=0}^{i-1} p^d S_d^{p^{i-1-d}} = \sum_{d=0}^{i-1} p^d(X_{d+1}^{p^{i-1-d}} + Y_{d+1}^{p^{i-1-d}})$  
(see \cite[Footnote 4]{Schmid}) and the statement about degrees from induction.
\end{proof}

For $1 \leq i \leq n$, let $\bar S_{i-1} \in \FF_p[X_1, \ldots, X_i, Y_1, \ldots, Y_i]$ be the reduction of $S_{i-1}$ modulo $p$
and let $f_i(Y_1, \ldots, Y_{i-1}, X_1, \ldots X_i)=\bar S_{i-1}-Y_i$.
Then $f_i=X_i+g_i$ where $g_i \in \FF_p[X_1, \ldots, X_{i-1}, Y_1, \ldots, Y_{i-1}]$ is a polynomial whose terms
each have degree congruent to one modulo $p-1$.
The meaning of (\ref{EWitt}) is that a Galois extension with group $\ZZ/p^n$ has equations 
$y_i^p-y_i=f_i(y_1, \ldots, y_{i-1}, x_1, \ldots x_i)$.

\begin{lemma} \label{Lgalois}
Let $L/L_0$ be a $\ZZ/p^n$-Galois extension and $\sigma$ a generator of $\ZZ/p^n$. 
There exist $x_i \in L_0$ and $y_i \in L$ for $1 \leq i \leq n$ such that $L/L_0$ is isomorphic
to the $\langle \sigma \rangle$-Galois extension with Witt vector equations and Galois action
$$(y_1^p, \ldots, y_n^p) = (y_1, \ldots, y_n) +' (x_1, \ldots, x_n)$$
$$\sigma(y_1, \ldots, y_n) = (y_1, \ldots, y_n) +' (1, 0, \ldots, 0).$$
Furthermore, there is a unique choice for $(x_1, \ldots, x_n)$ in standard form.
\end{lemma}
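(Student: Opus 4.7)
The plan is to combine the cited Artin--Schreier--Witt presentation with a Witt-vector rescaling that forces the chosen generator $\sigma$ to act by addition of $(1, 0, \ldots, 0)$, then normalize to standard form using the discussion preceding the lemma, and finally establish uniqueness by induction on $n$.

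First I would apply \cite[p.\ 331, Ex.\ 50]{La:Alg} to $L/L_0$ to obtain Witt vectors $(x_1, \ldots, x_n)$, $(y_1, \ldots, y_n)$ satisfying (\ref{EWitt}) with \emph{some} generator $\tau$ of $\Aut(L/L_0)$ acting as in (\ref{Eaction}). Writing $\sigma = \tau^a$ for the unique $a \in (\ZZ/p^n)^*$, let $[a^{-1}] \in W_n(\FF_p)$ denote the Witt-vector representative of $a^{-1}$. Because all its coordinates lie in $\FF_p$, both Frobenius and the Galois action fix $[a^{-1}]$. Setting $(\tilde y_1, \ldots, \tilde y_n) := [a^{-1}] \cdot (y_1, \ldots, y_n)$ and $(\tilde x_1, \ldots, \tilde x_n) := [a^{-1}] \cdot (x_1, \ldots, x_n)$ in Witt multiplication, one computes $\wp(\tilde y) = [a^{-1}] \, \wp(y) = \tilde x$ and $\sigma(\tilde y) = \tau^a(\tilde y) = [a^{-1}] \tau^a(y) = \tilde y +' [a^{-1}] \cdot [a] = \tilde y +' (1, 0, \ldots, 0)$. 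Moreover $\tilde x_1 = a^{-1} x_1 \notin \wp(L_0)$ since $\wp(L_0)$ is stable under $\FF_p^\times$-scaling. Now I apply the recipe from the paragraph preceding the lemma to reach standard form: iteratively kill each monomial $c\,x^{pj}$ (with $j \ge 0$) appearing in coordinate $i$ by subtracting $\wp$ of a Witt vector with $c^{1/p} x^j$ in the $i$th slot (using that $k$ is algebraically closed), and simultaneously clear the $x^{-1} k[[x^{-1}]]$-parts.

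For uniqueness, suppose two standard forms $(x_1, \ldots, x_n)$ and $(x'_1, \ldots, x'_n)$ yield isomorphic $\sigma$-extensions, equivalently $(x) -' (x') = \wp(w)$ for some $w \in W_n(L_0)$. I would induct on $n$. For the base case $n=1$, decompose $w = P(x) + R$ with $P \in k[x]$ and $R \in x^{-1} k[[x^{-1}]]$; then $\wp(w) = (P^p - P) + (R^p - R)$ splits according to $L_0 = k[x] \oplus x^{-1} k[[x^{-1}]]$. Writing $P = \sum_{j \geq 0} b_j x^j$, the coefficient of $x^{pj}$ in $P^p - P$ is $b_j^p - b_{pj}$, which must vanish by the standard-form hypothesis; inspecting the highest-degree term of $P$ shows that $P$ is a constant in $\FF_p$, so $P^p - P = 0$, and an analogous argument forces $R = 0$, yielding $x_1 = x'_1$. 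For the inductive step, truncation to $W_{n-1}(L_0)$ together with the inductive hypothesis gives $x_i = x'_i$ for $i < n$, so $(x) -' (x') = (0, \ldots, 0, x_n - x'_n) = \wp(w)$. Using the identities $\wp \circ V = V \circ \wp$ (for the Verschiebung $V$) and $\wp(W_n(\FF_p)) = 0$, one successively shows that $w_k \in \FF_p$ for each $k < n$ and may be replaced by $0$ without altering $\wp(w)$; the final coordinate then reads $x_n - x'_n = w_n^p - w_n$, to which the base case applies.

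The main obstacle is the Witt-vector bookkeeping in the uniqueness argument; the key ingredients are that $\wp$ annihilates any Witt vector with coordinates in $\FF_p$ and that $\wp$ commutes with the Verschiebung, which together let the induction decouple coordinate by coordinate and reduce to the single Artin--Schreier uniqueness that forms the base case.
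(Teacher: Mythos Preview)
Your existence argument is essentially identical to the paper's: both start from the Artin--Schreier--Witt presentation with an auxiliary generator $\tau$, write $\sigma=\tau^a$, and rescale the Witt vectors by the unit $a^{-1}\in W_n(\FF_p)$ to force $\sigma$ to act by $+'(1,0,\ldots,0)$, then pass to standard form.

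For uniqueness, the paper simply cites a generalization of \cite[Lemma~2.1.5]{Pr:deg}, whereas you supply a self-contained inductive proof. Your argument is correct: the key points are that the requirement that $\sigma$ act identically on both presentations forces the difference $(y')-'(y)$ to be $\sigma$-fixed, hence to lie in $W_n(L_0)$; that $\wp$ is $\FF_p$-linear, so the base case reduces to showing a polynomial in $k[x]$ with no exponent divisible by $p$ cannot be of the form $P^p-P$ unless it vanishes; and that in characteristic $p$ the Frobenius $F$ acts coordinatewise, giving $FV=VF$ and hence $\wp V=V\wp$, which lets you peel off one coordinate at a time after subtracting the $W_n(\FF_p)$-part. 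What you gain over the paper's citation is a transparent, elementary verification that avoids invoking an unstated external lemma; what the paper's approach buys is brevity. Both are valid.
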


\begin{proof}
There exist $x_i \in L_0$ and $y_i \in L$ and a generator $\tau$ of $\ZZ/p^n$ such that
$L/L_0$ has Witt vector equations (\ref{EWitt}) and Galois action (\ref{Eaction}).
Now $\sigma = \tau^b$ for some $b \in (\ZZ/p^n)^*$.  
Then $\sigma(y_1, \ldots, y_n) = (y_1, \ldots, y_n) +' b(1, 0, \ldots, 0)$.  
Since $b$ is invertible in $\ZZ/p^n \cong W^n(\ZZ/p) \subset W^n(L_0)$,   
one can replace $(y_1, \ldots, y_n)$ and  $(x_1, \ldots, x_n)$ with the Witt vectors $\frac{1}{b} (y_1, \ldots, y_n)$ 
and $\frac{1}{b} (x_1, \ldots, x_n)$.  
Since ${\rm Fr}$ is a ring homomorphism \cite[p.\ 331, Ex.\ 48]{La:Alg}, 
the extension $L/L_0$ still has Witt vector equations (\ref{EWitt})
and now $\sigma(y_1, \ldots, y_n) = (y_1, \ldots, y_n) +' (1, 0, \ldots, 0)$.

By a generalization of \cite[Lemma 2.1.5]{Pr:deg}, there is a unique choice of $(x_1, \ldots, x_n)$
in standard form compatible with the restriction on the Galois action.
\end{proof}

\subsection{Ramification filtrations for cyclic $p$-group extensions}

The ramification filtration of a $\ZZ/p^n$-Galois extension
is completely determined by either its lower or upper jumps, which in turn 
can be determined by the Witt vector equation.  
 
\begin{lemma} \label{Lconductor}
Let $L/L_0$ be a $\ZZ/p^n$-Galois extension with Witt vector $(x_1, \ldots, x_n)$ in standard form.
Let $u={\rm max}\{-p^{n-i}v_0(x_i)\}_{i=1}^n$.
Then $u$ is the last upper jump of $L/L_0$.
\end{lemma}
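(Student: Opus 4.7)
The plan is to proceed by induction on $n$, using a strengthened hypothesis giving the entire sequence of upper jumps $u_i = \max_{1 \leq j \leq i}\{-p^{i-j} v_0(x_j)\}$ of $L_i/L_0$; the lemma is then the case $i = n$. The base case $n=1$ is the classical Artin-Schreier conductor formula: $L_1/L_0$ is defined by $y_1^p - y_1 = x_1$ with $p \nmid v_0(x_1)$ (since $x_1$ is in standard form), so its unique upper jump is $-v_0(x_1)$, matching $u$.

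For the inductive step, I would first apply the hypothesis to the sub-$\ZZ/p^{n-1}$-extension $L_{n-1}/L_0$, whose Witt vector $(x_1,\ldots,x_{n-1})$ is still in standard form by Lemma \ref{Lgalois}; this yields all of $u_1,\ldots,u_{n-1}$ and, via the Herbrand translation recorded in Section \ref{Sram} (with $m=1$), the lower jumps $j_1,\ldots,j_{n-1}$. By Lemma \ref{Ljump}, the remaining lower jump $j_n$ of $L/L_0$ equals the unique lower jump of the $\ZZ/p$-Galois extension $L/L_{n-1}$, which is the Artin-Schreier extension defined by $y_n^p - y_n = f_n(y_1,\ldots,y_{n-1},x_1,\ldots,x_n)$, with $f_n = x_n + g_n$ as in Lemma \ref{LWitt}. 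To compute $j_n$, I would put $f_n$ into Artin-Schreier standard form modulo $\wp(L_{n-1})$ — iteratively choosing $h \in L_{n-1}$ so that $p \nmid v_{n-1}(f_n - \wp(h))$ — and take $j_n = -v_{n-1}(f_n - \wp(h))$. Finally, the Herbrand identity $u_n - u_{n-1} = (j_n - j_{n-1})/p^{n-1}$ converts $j_n$ back to $u_n$, and I would verify that $u_n = \max(pu_{n-1},\, -v_0(x_n)) = u$.

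The main obstacle is the iterated $\wp$-reduction. Monomial valuations in $f_n$ can be tracked using $v_{n-1}(x_i) = -p^{n-1} d_i$ (with $d_i := -v_0(x_i)$, from the ramification index of $L_{n-1}/L_0$) and, by the strengthened IH, $v_{n-1}(y_i) = -p^{n-1-i} u_i$. A small example (e.g.\ $n=2$, $p=3$, $d_1=1$, $d_2=4$) shows that the naive choice $h = x_n^{1/p}$ fails, since $x_n$ is typically not a $p$-th power in $L_{n-1}$; instead one must build $h$ as a suitable monomial in the $y_i$'s, using the lower-level identities $y_i^p = y_i + (\text{Witt expression in lower terms})$ to produce $\wp(h)$ whose leading $v_{n-1}$ term cancels the troublesome $p$-divisible contribution. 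The degree-mod-$(p-1)$ constraint on the monomials of $g_n$ from Lemma \ref{LWitt} constrains which valuations can arise, and a careful bookkeeping argument shows that after finitely many such reductions the remaining dominant term has $v_{n-1}$ coprime to $p$ and equal to the expected value of $j_n$.
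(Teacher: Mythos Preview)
The paper does not prove this lemma at all: it simply cites \cite[Thm.\ 1.1]{Ga:ls} (Garuti) and \cite[Prop.\ 4.2(1)]{Th:ASW}. Your proposal is therefore not a comparison target but an attempt at a self-contained proof, and the strategy you outline---induction on $n$, Artin-Schreier reduction of $f_n$ at the top layer, and Herbrand translation---is indeed the route taken in those references and in Schmid's original work.

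That said, the proposal does not yet constitute a proof, because the step you label ``careful bookkeeping'' is precisely the entire content of the lemma. Everything before that point is scaffolding; the substance is showing that, after $\wp$-reduction of $f_n = x_n + g_n$ in $L_{n-1}$, the surviving leading valuation is exactly $-j_n$ with $j_n$ determined by $u_n = \max(pu_{n-1}, -v_0(x_n))$ via Herbrand. You assert this holds but do not carry it out, and the reduction is genuinely delicate: one must separately handle the contribution of $x_n$ (whose leading monomial has $v_{n-1}$ divisible by $p$ and must be killed using elements built from the $y_i$) and the contribution of $g_n$ (whose dominant monomial has $v_{n-1} = -(p^{n}-p^{n-1}+ \cdots)$-type expressions tied to the \emph{lower} jumps).

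There is also a concrete error in your setup. You claim $v_{n-1}(y_i) = -p^{\,n-1-i} u_i$, but this is false in general. One has $v_i(y_i) = v_{i-1}(f_i)$, not $-u_i$. For instance, take $n=3$, $i=2$, and $-v_0(x_2) \le p(-v_0(x_1))$ with $-v_0(x_1)=d_1$; then the dominant term of $f_2$ is $-x_1^{p-1}y_1$ from $g_2$, giving $v_1(f_2) = -(p^2-p+1)d_1$, hence $v_2(y_2) = -(p^2-p+1)d_1$, whereas $u_2 = p d_1$. The quantity $(p^2-p+1)d_1$ is the lower jump $j_2$, not the upper jump. In the opposite regime $-v_0(x_2) > p d_1$ one gets $v_2(y_2) = -p(-v_0(x_2))$, which equals neither $-u_2$ nor $-j_2$. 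So the valuations you feed into the monomial bookkeeping are wrong, and the argument as written cannot be completed without first correcting this and then actually performing the reduction.
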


\begin{proof}
This follows from \cite[Thm.\ 1.1]{Ga:ls}; see also \cite[Prop. 4.2(1)]{Th:ASW}.
\end{proof}

We retrieve the following classical result.

\begin{lemma} \label{Lcyclicjumps}
A sequence of positive integers $w_1 \leq \cdots \leq w_n$
occurs as the set of upper jumps of a $\ZZ/p^n$-Galois extension of $L_0$ 
if and only if $p \nmid w_1$ and, for $1 < i \leq n$, either $w_i=pw_{i-1}$ or both $w_i > pw_{i-1}$ and $p \nmid w_i$.
\end{lemma}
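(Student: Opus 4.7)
The plan is to deduce both directions from the Witt vector description of Lemma \ref{Lgalois} combined with the conductor formula of Lemma \ref{Lconductor}. The key observation is that, since the upper numbering is stable for quotients, the $i$th upper jump $w_i$ of a $\ZZ/p^n$-Galois extension $L/L_0$ equals the last upper jump of its $\ZZ/p^i$-Galois subquotient, and the latter is represented by the truncation $(x_1,\ldots,x_i)$ of the Witt vector representing $L/L_0$.

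For the ``only if'' direction, I would choose a Witt vector $(x_1, \ldots, x_n)$ in standard form for $L/L_0$ via Lemma \ref{Lgalois} and set $d_j = -v_0(x_j)$, with the convention that $d_j = -\infty$ when $x_j=0$. Applying Lemma \ref{Lconductor} to each truncation yields $w_i = \max\{p^{i-j} d_j : 1 \leq j \leq i\}$, which in turn gives the recursion $w_i = \max(p w_{i-1}, d_i)$ with $w_1 = d_1$. Standard form forces $p \nmid d_j$ whenever $d_j > 0$, and $x_1$ not being a $\wp$th power forces $d_1 > 0$. The listed conditions on $\{w_i\}$ then drop out by dichotomy: either $d_i \leq p w_{i-1}$, in which case $w_i = p w_{i-1}$, or $d_i > p w_{i-1}$, in which case $w_i = d_i$ with $p \nmid w_i$.

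For the ``if'' direction, I would construct an explicit Witt vector realizing the prescribed sequence. Fix $x \in L_0$ with $v_0(x) = -1$, and set $x_1 = x^{w_1}$; for $i > 1$, set $x_i = 0$ when $w_i = p w_{i-1}$ and $x_i = x^{w_i}$ otherwise. The hypothesis $p \nmid w_i$ in the nonzero cases ensures the tuple is in standard form, and the inequalities $w_1 > 0$ and $p \nmid w_1$ force $x_1$ not to be a $\wp$th power in $L_0$, so Lemma \ref{Lgalois} produces a genuine $\ZZ/p^n$-Galois extension $L/L_0$. Applying Lemma \ref{Lconductor} to the $i$th truncation gives the last upper jump $\max(p w_{i-1}, w_i) = w_i$ by construction, so the sequence of upper jumps of $L/L_0$ is precisely $w_1, \ldots, w_n$.

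The only mildly delicate point is the behavior of the max in Lemma \ref{Lconductor} when some $x_j$ vanish: one has to check that the zero entries do not interfere with the recursion $w_i = \max(p w_{i-1}, d_i)$. This is a notational rather than substantive issue, and once handled, the argument is pure bookkeeping on top of Lemmas \ref{Lgalois} and \ref{Lconductor}.
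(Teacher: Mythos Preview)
Your proposal is correct and is precisely the argument the paper has in mind: the paper's own proof is a one-line citation stating that the result ``follows from Lemma~\ref{Lconductor}'' (originally due to Schmid), and your write-up spells out exactly how---via truncations of the standard-form Witt vector and the recursion $w_i=\max(pw_{i-1},d_i)$---that deduction goes. Your explicit construction for the ``if'' direction is the same one the paper uses later in the proof of Proposition~\ref{Pdominate}.
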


\begin{proof}
The result, originally found in \cite{Schmid}, follows from Lemma \ref{Lconductor}; see also \cite[Lemma 19]{Pr:genus}.
\end{proof}

The following lemma will be used to compare the upper jumps of the $G$-Galois extension $L/K$ 
and the $\ZZ/p^n$-Galois extension $L/L_0$.

\begin{lemma} \label{Lcomparejumps}
Suppose $L/K$ has upper jumps $u_1 \leq \cdots \leq u_n$.  
Then $L/L_0$ has upper jumps $w_1 \leq \cdots \leq w_n$ where $w_i=mu_i$ for $1 \leq i \leq n$.
\end{lemma}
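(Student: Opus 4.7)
The plan is to deduce this from the Herbrand translation formula recorded at the end of Section~\ref{Sram}, once one checks that $L/K$ and $L/L_0$ have the same positive lower jumps.

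First I would compare the lower ramification filtrations of $L/K$ and $L/L_0$. Since $L/K$ is totally ramified with Galois group $G=P\rtimes M$ and $p\nmid m$, the wild inertia subgroup is $G_1=P=\Aut(L/L_0)$. Thus $G_v\subseteq P$ for every $v\geq 1$. By stability of the lower numbering under subextensions (\cite[IV, Prop.\ 2]{Se:lf}), the $v$-th ramification group of $L/L_0$ equals $G_v\cap P$, which for $v\geq 1$ is just $G_v$. Therefore the positive lower jumps of $L/L_0$ are exactly the positive lower jumps $j_1\leq\cdots\leq j_n$ of $L/K$.

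Next I would apply the Herbrand formula quoted at the end of Section~\ref{Sram} to each extension. For $L/K$, with Galois group of order $p^n m$, it gives $u_i-u_{i-1}=(j_i-j_{i-1})/(p^{i-1}m)$. For $L/L_0$, with Galois group $P$ of order $p^n$ (so the tame factor $m$ is replaced by $1$), the same formula yields $w_i-w_{i-1}=(j_i-j_{i-1})/p^{i-1}$. Comparing, $w_i-w_{i-1}=m(u_i-u_{i-1})$, and telescoping from $u_0=w_0=0$ gives $w_i=mu_i$ for $1\leq i\leq n$.

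There is no real obstacle here; the only point that requires any care is verifying that the two extensions share the same positive lower jumps, which is immediate once one observes that $G_1=P$. The monotonicity $w_1\leq\cdots\leq w_n$ is inherited from that of the $u_i$.
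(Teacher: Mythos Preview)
Your proof is correct and follows essentially the same approach as the paper: both argue that the positive lower jumps of $L/K$ and $L/L_0$ coincide (via \cite[IV, Prop.\ 2]{Se:lf}) and then apply Herbrand's formula to each extension to obtain $w_i-w_{i-1}=m(u_i-u_{i-1})$. Your added justification that $G_1=P$ forces $G_v\cap P=G_v$ for $v\geq 1$ is the explicit reason the cited proposition yields equality of the positive lower jumps, but the argument is otherwise identical.
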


\begin{proof}
By  \cite[IV, Prop.\ 2]{Se:lf}, the lower jumps of $L/L_0$ equal the lower jumps $j_1 \leq \cdots \leq j_n$ of $L/K$.
Herbrand's formula \cite[IV, \S3]{Se:lf} implies that $u_{i}-u_{i-1}=(j_{i}-j_{i-1})/p^{i-1}m$ 
and that $w_{i}-w_{i-1}=(j_{i}-j_{i-1})/p^{i-1}$ for $1 \leq i \leq n$. 
\end{proof}

\section{Cyclic-by-tame extensions}\label{SCycByTame}

Suppose $L/K$ is a separable $G$-Galois field extension as in Sections \ref{Sgroup}-\ref{Scyclic}. 
In this section, we find necessary conditions on the ramification filtrations and equations arising from the 
$\ZZ/m$-Galois action on $L$.

\subsection{The case of Galois extensions with group $\ZZ/p \rtimes \ZZ/m$} \label{Sbase}

\begin{lemma} \label{Lgcd}
Consider the $J_1$-Galois extension $L_1/K$ with equations $x^m=1/t$ and $y_1^p-y_1=x_1$
and Galois action $c(x)=\zeta x$ and $\sigma(y_1)=y_1+1$.
\begin{description}
\item{(i)}
The lower jump $j$ of $L_1/L_0$ satisfies $m' = \gcd(m, j)$.
\item{(ii)}
Also $m | j(p-1)$.  In particular, $j \equiv jp^r \bmod m$ for any $r \in \NN$.
\item{(iii)} Also $c(y_1) = \alpha^{-1} y_1 = \zeta^{j}y_1$.  
\end{description}
\end{lemma}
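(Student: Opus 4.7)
Parts (i) and (ii) will both follow quickly from (iii), so the plan focuses on (iii). By Lemma~\ref{Lgalois}, we may assume $x_1$ is in standard form, writing $x_1 = \sum_{i=1}^j a_i x^i$ with $a_j \neq 0$ and $a_i = 0$ whenever $p \mid i$, where $j = -v_0(x_1)$ is the lower jump of $L_1/L_0$ by Lemma~\ref{Lconductor}.

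For (iii), the argument proceeds in three stages. First, using the commutation relation $c\sigma c^{-1} = \sigma^\alpha$ in $J_1$ (from Section~\ref{Sgroup}, noting that in $J_1$ the exponent $\alpha'$ reduces to $\alpha \in \FF_p^*$), rewrite $\sigma c = c\sigma^{\alpha^{-1}}$ and apply it to $y_1$ to obtain $\sigma(c(y_1)) = c(y_1) + \alpha^{-1}$; hence $w := c(y_1) - \alpha^{-1} y_1$ is $\sigma$-invariant and lies in $L_0$. Second, apply $c$ to the Artin-Schreier equation $y_1^p - y_1 = x_1$ and use $\alpha^{-p} = \alpha^{-1}$ (since $\alpha \in \FF_p^*$) to derive
\[
w^p - w = c(x_1) - \alpha^{-1} x_1 = \sum_{i=1}^j a_i(\zeta^i - \alpha^{-1})\, x^i.
\]
Third, analyze this equation coefficient by coefficient in the Laurent expansion of $w$ on $L_0 = k((x^{-1}))$: the right side has no $x$-exponent divisible by $p$, so a leading-term argument forces $w$ to have no positive $x$-power (else $w^p$ would contribute an unmatched $x^{pN}$ term), and a descent on the $p$-adic valuation of the exponent kills all strictly negative $x$-powers of $w$. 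Thus $w = b_0 \in \FF_p$, and the vanishing of the remaining coefficients forces $a_i(\zeta^i - \alpha^{-1}) = 0$ for all $i$ with $a_i \neq 0$; evaluating at $i = j$ yields $\zeta^j = \alpha^{-1}$. Finally, remove the residual $w \in \FF_p$ by a case split: if $\alpha \neq 1$, replace $y_1$ by $y_1 + u$ with $u = -w(1-\alpha^{-1})^{-1} \in \FF_p$, which achieves $c(y_1) = \alpha^{-1} y_1$ without altering $x_1$ (since $\wp(u) = 0$); if $\alpha = 1$, iterating $c(y_1) = y_1 + w$ gives $c^m(y_1) = y_1 + mw$, and since $c^m = \mathrm{id}$ and $p \nmid m$, this forces $w = 0$.

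Parts (i) and (ii) then follow at once: for (i), from (iii) we have $\alpha = \zeta^{-j}$, so its order in $k^*$ is $m/\gcd(m,j)$; equating this with the order $m/m'$ of $\alpha$ in $\FF_p^*$ (Section~\ref{Sgroup}) gives $\gcd(m,j) = m'$. For (ii), $\alpha^{p-1} = 1$ yields $\zeta^{j(p-1)} = 1$, hence $m \mid j(p-1)$, and iteration gives $jp^r \equiv j \bmod m$. The main obstacle is the Laurent-series analysis in the third stage of (iii): one must carefully track how the Frobenius term $w^p$ and the linear term $w$ interact across all $x$-exponents to rule out any component of $w$ beyond a constant in $\FF_p$, exploiting both the standard-form hypothesis on $x_1$ and the fact that elements of $L_0$ are bounded above in the $x$-exponent.
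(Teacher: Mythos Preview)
Your argument is correct and takes a genuinely different route from the paper. The paper's proof is essentially three citations: part (i) is read off from \cite[IV, Prop.~9]{Se:lf} (the compatibility of the maps $\theta_0$ and $\theta_j$ under conjugation), part (iii) is deferred to \cite[Lemma~1.4.1]{Pr:deg}, and part (ii) is deduced from (i) by the purely group-theoretic observation that $m/m'$ is the order of the image of the conjugation map $\nu:\ZZ/m\to\Aut(\ZZ/p)$, so $m\mid m'(p-1)$ and hence $m\mid j(p-1)$ once $m'=\gcd(m,j)$. You instead make (iii) the engine: the relation $\sigma c=c\sigma^{\alpha^{-1}}$ forces $w:=c(y_1)-\alpha^{-1}y_1\in L_0$, and your Laurent-series analysis of $w^p-w=c(x_1)-\alpha^{-1}x_1$ (using that $x_1$ is in standard form) is clean and correct, yielding $\zeta^j=\alpha^{-1}$ and $w\in\FF_p$. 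Parts (i) and (ii) then drop out immediately by comparing the order of $\zeta^j$ with that of $\alpha$. What your approach buys is a completely self-contained and elementary argument that avoids both Serre's $\theta_i$-maps and the external reference; what the paper's approach buys is brevity and a direct link to the general structural results on higher ramification groups that are reused later (e.g.\ in Proposition~\ref{Pcongruence}). One point worth noting: your final adjustment of $y_1$ by an element of $\FF_p$ when $\alpha\neq 1$ makes explicit something the paper's statement leaves implicit --- namely that the identity $c(y_1)=\alpha^{-1}y_1$ holds for a \emph{suitable} Artin--Schreier generator $y_1$, not for every root of $y_1^p-y_1=x_1$; your case split (iterating $c$ when $\alpha=1$, solving a linear equation in $\FF_p$ when $\alpha\neq 1$) handles this correctly.
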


\begin{proof}
\begin{description}
\item{(i)}
This follows from \cite[IV, Prop.\ 9]{Se:lf}, see also \cite[Lemma 1.4.1(iv)]{Pr:deg}.
\item{(ii)}
The conjugation action of $\ZZ/m$ on $\ZZ/p$ gives a homomorphism $\nu:\ZZ/m \to \Aut(\ZZ/p)$.
By definition, ${\rm Im}(\nu)$ has order $m/m'$ and ${\rm Ker}(\nu)=\langle c^{m/m'} \rangle$.
Thus $m| m'(p-1)$.  By part (i), $m' = \gcd(m, j)$, so $m | j(p-1)$.
\item{iii)} \cite[Lemma 1.4.1(ii)-(iii)]{Pr:deg}.
\end{description}
\end{proof}

\subsection{A congruence condition on the ramification filtration}\label{Scongruence}

\begin{proposition} \label{Pcongruence}
\begin{description}
\item{(i)}
The lower jumps in the ramification filtration of the $P$-Galois extension $L/L_0$ are all congruent modulo $m$.
\item{(ii)}
The upper jumps in the ramification filtration of the $P$-Galois extension $L/L_0$ are all congruent modulo $m$.
\end{description}  
\end{proposition}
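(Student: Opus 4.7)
My plan is to reduce part (ii) to part (i) via Herbrand's formula and then prove (i) by induction on $i$; the hard content lies in part (i), and specifically in analyzing how $c$ acts on an Artin--Schreier generator of $L_i/L_{i-1}$.

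For the reduction: applying Herbrand's formula to $L/L_0$ gives $w_i - w_{i-1} = (j_i - j_{i-1})/p^{i-1}$. Assuming $j_i \equiv j_{i-1} \pmod m$, I write $j_i - j_{i-1} = m\ell_i$; the integrality of the upper jumps $w_i$ (Hasse--Arf for the abelian extension $L/L_0$) together with $\gcd(p,m)=1$ forces $p^{i-1} \mid \ell_i$, so $w_i - w_{i-1}$ is a multiple of $m$, yielding (ii).

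For (i), the case $i=1$ is vacuous. For the inductive step, Lemma \ref{Ljump} identifies $j_i$ with the lower jump of $L_i/L_{i-1}$, and by Lemma \ref{Lgalois} we have $L_i = L_{i-1}(y_i)$ with $y_i^p - y_i = \xi_i \in L_{i-1}$, where $\xi_i$ (after adjusting modulo $\wp(L_{i-1})$) is in standard form with $-v_{i-1}(\xi_i) = j_i$ coprime to $p$. Using $c\sigma_{i-1}c^{-1} = \sigma_{i-1}^\alpha$ on $L_i/L_{i-1}$ (Lemma \ref{Lgroup}) together with $\sigma_{i-1}(y_i)=y_i+1$, I compute $\sigma_{i-1}(c(y_i))=c(y_i)+\alpha^{-1}$, so $\alpha c(y_i) - y_i \in L_{i-1}$. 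Substituting back into $c(y_i)^p - c(y_i) = c(\xi_i)$ and using $\alpha^{p-1}=1$ yields the key congruence
\[
\alpha\, c(\xi_i) \equiv \xi_i \pmod{\wp(L_{i-1})}.
\]

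To finish, let $\chi_{i-1}\in k^*$ denote the character of $c$ on the cotangent space $\mathfrak{m}_{L_{i-1}}/\mathfrak{m}_{L_{i-1}}^2$; from $c(x^{-1})=\zeta^{-1}x^{-1}$ we get $\chi_0 = \zeta^{-1}$. Since $L_{i-1}/L_0$ is totally ramified of degree $p^{i-1}$, $v_{i-1}(x^{-1}) = p^{i-1}$, so writing $\pi_{i-1}^{p^{i-1}} = u\,x^{-1}$ for a unit $u$ and comparing $c$-actions (using that $c(u)/u \equiv 1 \pmod{\pi_{i-1}}$ because $c$ fixes the residue field $k$), one obtains $\chi_{i-1}^{p^{i-1}} = \chi_0$. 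With $\xi_i = a_i\pi_{i-1}^{-j_i} + (\text{lower pole order})$ in standard form, the leading term of $\alpha c(\xi_i) - \xi_i$ is $(\alpha\chi_{i-1}^{-j_i}-1)a_i\pi_{i-1}^{-j_i}$; since $\wp(L_{i-1})$ contains no element with leading pole of order prime to $p$, this coefficient must vanish, giving $\alpha = \chi_{i-1}^{j_i}$. The same argument applied to $i=1$ yields $\alpha = \chi_0^{j_1}$, so $\chi_{i-1}^{j_i} = \chi_{i-1}^{p^{i-1}j_1}$ and hence $j_i \equiv p^{i-1}j_1 \pmod m$; combined with Lemma \ref{Lgcd}(ii) (which gives $pj_1 \equiv j_1 \pmod m$), we conclude $j_i \equiv j_1 \pmod m$. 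The main obstacle is the character computation $\chi_{i-1}^{p^{i-1}} = \zeta^{-1}$, which tracks how the tame $c$-action on the cotangent line of $L_0$ propagates up the wild tower.
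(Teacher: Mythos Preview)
Your argument is correct, and for part (ii) it matches the paper's deduction from (i) via Herbrand and Hasse--Arf. For part (i), however, you take a genuinely different route from the paper. The paper's proof is a four-line application of \cite[IV, Prop.~7 and Prop.~9]{Se:lf}: letting $u=\theta_0(c)\in k^*$ be computed from a uniformizer of $L_n$, Serre's compatibility $\theta_{j_i}(c\sigma_{i-1}c^{-1})=u^{j_i}\theta_{j_i}(\sigma_{i-1})$ together with $c\sigma_{i-1}c^{-1}=\sigma_{i-1}^{\alpha'}$ immediately gives $u^{j_i}=\alpha$ for every $i$, and since $u$ has order $m$ the congruence follows. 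Your proof instead works level by level with Artin--Schreier generators: you derive the relation $\alpha\,c(\xi_i)\equiv \xi_i \pmod{\wp(L_{i-1})}$ by hand from the conjugation relation, then read off $\alpha=\chi_{i-1}^{j_i}$ from the leading pole term, and finally propagate the tame character through the tower via $\chi_{i-1}^{p^{i-1}}=\zeta^{-1}$. This is essentially a direct re-proof, in the Artin--Schreier setting, of the special case of \cite[IV, Prop.~9]{Se:lf} that the paper quotes as a black box. What your approach buys is self-containment and an explicit connection to the Artin--Schreier data that the paper exploits later in Proposition~\ref{Paction}; what the paper's approach buys is brevity and a proof that visibly does not depend on the cyclic structure of $P$ beyond the conjugation relation.

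Two small points worth tightening in your write-up: your appeal to ``standard form'' for $\xi_i\in L_{i-1}$ is not the paper's notion (which is specific to $L_0$); all you actually use is that $\xi_i$ can be chosen with $-v_{i-1}(\xi_i)=j_i$ prime to $p$, which is standard Artin--Schreier theory. Also, to conclude $j_i\equiv p^{i-1}j_1\pmod m$ from $\chi_{i-1}^{j_i}=\chi_{i-1}^{p^{i-1}j_1}$ you implicitly use that $\chi_{i-1}$ has order exactly $m$; this follows since $\chi_{i-1}^{p^{i-1}}=\zeta^{-1}$ has order $m$ and $\gcd(p,m)=1$, but it deserves a sentence.
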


\begin{proof}
\begin{description}

\item{(i)}
The $i$th lower jump of $L/L_0$ is $j_i$ by \cite[IV, Prop.\ 2]{Se:lf}.
Let $\pi$ be a uniformizer of $\Theta_n$ and let $u=c(\pi)/\pi \in \Theta_n^{*}$.  Then $u$ equals
$\theta_0(c) \in k^{*}$ in the notation of \cite[IV, Prop.\ 7]{Se:lf}.  
The order of $u$ is $m$ by \cite[IV, Prop.\ 7]{Se:lf}.  
By the proof of Lemma \ref{Lgroup}, $c \sigma_{i-1} c^{-1} = \sigma_{i-1}^{\alpha'}$ for $1 \leq i \leq n$.
Since $\sigma_{i-1}$ generates $H_{i-1}/H_{i}=I_{j_i}/I_{j_i+1} $, 
\cite[IV, Prop.\ 9]{Se:lf} shows that $\theta_{j_i}(\sigma_{i-1}^{\alpha'})=u^{j_i}\theta_{j_i}(\sigma_{i-1})$ for $1 \leq i \leq n$.
Thus $u^{j_i}=\alpha \in k^*$ for $1 \leq i \leq n$ and so $j_1 \equiv \cdots \equiv j_n \bmod m$.

\item{(ii)}
Let $w_1 \leq \cdots \leq w_n$ be the upper jumps of the $P$-Galois extension $L/L_0$.
Since $P$ is abelian, the Hasse-Arf Theorem implies that $w_i \in \NN$.
By Herbrand's formula, $w_i-w_{i-1} = (j_i-j_{i-1})/p^{i-1}$.
Thus $w_i-w_{i-1} \equiv 0 \bmod m$ by part (i).
\end{description}
\end{proof}

{\bf Class field theory approach:}
If $k$ is instead a finite field, here is a different proof of Proposition \ref{Pcongruence} which uses class field theory.

\begin{proof}[Second proof of Proposition \ref{Pcongruence}]
The $G$-Galois extension $L/K$ dominates the $\langle c \rangle$-Galois extension $L_0/K$ where
$L_0 \simeq k((x^{-1}))$, $x^m=1/t$, and $c(x)=\zeta x$.
Let $L/L_0$ be the $P$-Galois subextension, which has upper jumps 
$w_1 \leq \cdots \leq w_n$ where $w_i=mu_i$ by Lemma \ref{Lcomparejumps}.
Thus the upper ramification group $I^\ell$ of $L/L_0$ equals $H_{i}$ if $w_{i} < \ell \leq w_{i+1}$.   

Let $Q=(x^{-1})$ be the maximal ideal of $k[[x^{-1}]]$.
Consider the unit groups $U^d=1+Q^d$ of $k[[x^{-1}]]$ \cite[IV.2]{Se:lf}.
By \cite[IV, Prop.\ 6]{Se:lf}, $U^d/U^{d+1}$ is canonically isomorphic to $Q^d/Q^{d+1}$.
Now, $Q^d$ carries a natural $\langle c \rangle$-module structure where $c((x^{-1})^d)=\zeta_m^{-d} (x^{-1})^d$.
Thus $U^d/U^{d+1}$ carries a natural structure as a $\langle c \rangle$-module, and this structure 
depends on the congruence class of $d$ modulo $m$.

By \cite[XV.2, Cor.\ 3 \& pg.\ 229]{Se:lf}, 
there is a reciprocity isomorphism $\omega:L_0^*/NL^* \to P$ and thus 
there are isomorphisms $\omega_n: U^d/(U^{d+1}NU_L^{\psi(d)}) \to I^d/I^{d+1}$.
Here $N:L \to L_0$ is the norm map and $\psi$ is Herbrand's function.
In particular, taking $d=w_{i}$, then
$U^{w_i}/(U^{w_i+1}NU_L^{\psi(w_i)}) = H_{i-1}/H_i$.

Now $H_{i-1}/H_i$ has a $\langle c \rangle$-module structure and
this $\langle c \rangle$-module structure is independent of $i$ by Lemma \ref{Lgroup}.
After pulling back by $\omega$, 
this implies that the $\langle c \rangle$-module structure of $U^{w_i}/(U^{w_i+1}NU_L^{\psi(w_i)})$ 
and thus of $U^{w_i}$ is independent of $i$.
Thus $\zeta_m^{-w_i}$ is independent of $i$ and so $w_i \equiv w_1 \bmod m$.

The lower jumps are also congruent modulo $m$ by Herbrand's formula.
\end{proof} 

At this point, one can prove that the conditions in Theorem \ref{Tmaintheorem} are necessary;
we will postpone this until Section \ref{Smaintheorem}.

\subsection{Actions and isomorphisms}

This section contains two results that will be needed in Section \ref{Smain}.

\begin{proposition}\label{Paction}
Suppose $L_0 \simeq K[x]/(x^m-1/t)$ and $c(x)=\zeta x$.
Suppose $L/L_0$ is a $P$-Galois extension with Witt vector equations (\ref{EWitt}), 
Galois action (\ref{Eaction}), and first lower jump $j$ such that $\zeta^{j}=\alpha^{-1}$.
Then $L/K$ is a $G$-Galois extension if and only if 
$c(x_i) = \zeta^{j}x_i$ and $c(y_i) = \zeta^{j}y_i$ for $1 \leq i \leq n$.
\end{proposition}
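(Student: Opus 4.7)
My plan is to prove both directions using the uniqueness of the standard-form Witt vector (Lemma \ref{Lgalois}) combined with a Witt-vector reinterpretation of component-wise scaling. The key auxiliary facts are: (i) because $\zeta^{jp} = \zeta^j$ (equivalently $m \mid j(p-1)$, from Lemma \ref{Lgcd}(ii)), component-wise scaling of a Witt vector in $W^n(L)$ by $\zeta^j$ coincides with Witt multiplication by the Teichm\"uller lift $[\zeta^j] := (\zeta^j, 0, \ldots, 0)$; and (ii) since $\alpha' \in (\ZZ/p^n)^*$ has order dividing $m$ and $\gcd(m,p)=1$, it lies in the Teichm\"uller image of $\FF_p^*$, so the Witt vector representing the integer $\alpha'$ in $W^n(\FF_p) = \ZZ/p^n$ equals $[\alpha] = (\alpha, 0, \ldots, 0)$.

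For $(\Leftarrow)$, assume $c(x_i) = \zeta^j x_i$ and define $c$ on $L$ by $c(y_i) = \zeta^j y_i$. Each equation $y_i^p - y_i = x_i + g_i$ from Lemma \ref{LWitt} has $g_i$ with every monomial of total degree $\equiv 1 \pmod{p-1}$, so the right-hand side scales by $\zeta^j$ when all $x_\ell, y_\ell$ do, matching the scaling of the left-hand side by $\zeta^{jp}=\zeta^j$. Hence $c$ is a well-defined ring automorphism of $L$, and $c^m = \mathrm{id}$ since $\zeta^{jm}=1$. The conjugation relation is checked in $W^n(L)$ by
\[
c \sigma c^{-1}(y) \;=\; [\zeta^{-j}]\bigl([\zeta^j] y +' (1,0,\ldots,0)\bigr) \;=\; y +' [\zeta^{-j}] \;=\; y +' [\alpha] \;=\; \sigma^{\alpha'}(y),
\]
using distributivity of Teichm\"uller multiplication over $+'$, the equality $\zeta^{-j} = \alpha$, and fact (ii).

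For $(\Rightarrow)$, applying $c$ to the defining Witt equation and to the $\sigma$-action shows that $(c(y_1),\ldots,c(y_n))$ together with data $(c(x_1),\ldots,c(x_n))$ represents $L/L_0$ with new generator $c\sigma c^{-1} = \sigma^{\alpha'}$ acting in standard form $\sigma^{\alpha'}(c(y)) = c(y) +' (1,0,\ldots,0)$. Since $c$ acts on $L_0 \cong k((x^{-1}))$ by $x \mapsto \zeta x$, it multiplies the coefficient of $x^n$ in each $x_i$ by $\zeta^n$, preserving the standard-form conditions (membership in $k[x]$ and absence of $p$-power exponents); so $(c(x_1),\ldots,c(x_n))$ is in standard form. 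The $(\Leftarrow)$ calculation shows that $((\zeta^j y_1,\ldots,\zeta^j y_n),(\zeta^j x_1,\ldots,\zeta^j x_n))$ is another standard-form representation of the same extension with the same generator and action. By uniqueness in Lemma \ref{Lgalois}, the two representations coincide up to a $W^n(\FF_p)$-translation of the $y$-part; choosing the (unique, when $\zeta^j\neq 1$) such normalization of $(y_1,\ldots,y_n)$ yields $c(y_i) = \zeta^j y_i$ and $c(x_i) = \zeta^j x_i$.

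The main obstacle is fact (ii): the identification of the Witt vector representing $\alpha'$ with $[\alpha]$ is not a general Witt-vector identity but rests on the structure of $(\ZZ/p^n)^*$, whose $m$-torsion (for $\gcd(m,p)=1$) coincides with the Teichm\"uller image of $\FF_p^*$. Without this identification, the final equality in the $(\Leftarrow)$ display would fail, and the two representations compared in $(\Rightarrow)$ would have the same data but different generators, preventing the application of uniqueness.
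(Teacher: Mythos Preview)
Your argument is correct and takes a genuinely different route from the paper's. The paper proceeds by induction on $n$: for the forward direction it applies Lemma~\ref{Lgcd}(iii) (ultimately cited from \cite{Pr:deg}) at each layer $L_i/K_{i-1}$ of the tower to obtain $c(y_i)=\zeta^j y_i$, and then deduces $c(x_i)=\zeta^j x_i$ from the degree~$\equiv 1\pmod{p-1}$ property of the Witt addition polynomials; for the converse it again inducts, checking that $c$ stabilizes the equation $y_n^p-y_n=f_n$ at each step. Your approach instead works globally in $W^n(L)$ via the two structural observations that (i) componentwise scaling by $\zeta^j$ coincides with Witt multiplication by the Teichm\"uller element $[\zeta^j]$ (because $\zeta^j\in\FF_p$), and (ii) the integer $\alpha'\in\ZZ/p^n$ is itself the Teichm\"uller lift $[\alpha]$ (because its order is prime to $p$). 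These let you verify the relation $c\sigma c^{-1}=\sigma^{\alpha'}$ in one line, and reduce the forward direction to the uniqueness clause of Lemma~\ref{Lgalois}.

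What each approach buys: the paper's induction is more elementary and stays close to the explicit Artin--Schreier description, but relies on the external reference \cite{Pr:deg} for the base case. Your argument is more conceptual and self-contained once facts (i) and (ii) are in hand, and it makes transparent \emph{why} the condition $\zeta^j=\alpha^{-1}$ is exactly what is needed for the conjugation relation. Two small caveats worth noting: your forward direction tacitly assumes $(x_1,\ldots,x_n)$ is in standard form (otherwise uniqueness does not apply), and your final normalization step for the $y_i$ shows only that \emph{some} choice of $y$ satisfies $c(y_i)=\zeta^j y_i$ rather than the given one---but the paper's formulation has the same implicit looseness, and in practice the proposition is only ever invoked with standard-form data.
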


\begin{proof}
Suppose $L/K$ is a $G$-Galois extension. 
Then $L_1/K$ is a $J_1$-Galois extension.
By Lemma \ref{Lgcd}(iii), $c(y_1)/y_1=\alpha^{-1}=\zeta^{j}$.
Since $y_1^p-y_1=x_1$, this implies that $c(x_1) = \zeta^{j}x_1$.
As an inductive hypothesis, suppose that $c(x_i) = \zeta^{j}x_i$ and $c(y_i) = \zeta^{j}y_i$ for $1 \leq i \leq n-1$.

Now $L_n/K_{n-1}$ is a $J_n$-Galois extension of local fields
and $J_n$ and $J_1$ are canonically isomorphic by Lemma \ref{Lgroup}.
In other words, the value of $\alpha$ for $\Aut(L_n/K_{n-1})$ is the same as for $\Aut(L_1/K)$.    
By Kummer theory, there exists a uniformizer $\pi_{n-1}$ of $L_{n-1}$ such that
$c$ acts on $\pi_{n-1}$ via multiplication by some $\gamma \in \mu_m$.
Then $L_n/K_{n-1}$ satisfies the hypotheses of Lemma \ref{Lgcd},
with $1/\pi_{n-1}$, $y_n$, $j_n$, and $\gamma^{-1}$ replacing $x$, $y_1$, $j$, and $\zeta$ respectively. 
Applying Lemma \ref{Lgcd}(iii) to $L_n/K_{n-1}$ implies that $c(y_n)/y_n=\gamma^{-j_n} =\alpha^{-1}=\zeta^j$.

The equation for $L_n/L_{n-1}$ is $y_n^p-y_n=x_n+g_n$ where the terms of the polynomial
$g_n \in \FF_p[x_1, \ldots, x_{n-1}, y_1, \ldots, y_{n-1}]$
each have degree congruent to one modulo $p-1$.
By the inductive hypothesis and Lemma \ref{Lgcd}(ii), $c$ scales $g_n$ by $\zeta^j$.
Thus $c$ scales both $y_n^p-y_n-x_n$ and $y_n$ by $\zeta^j$, which implies $c(x_n)=\zeta^j x_n$.

Conversely, suppose $c(x_i) = \zeta^{j}x_i$ and $c(y_i) = \zeta^{j}y_i$ for $1 \leq i \leq n$.
The proof that $L/K$ is $G$-Galois 
proceeds by induction on $n$; the case $n=1$ can be computed explicitly,  
see e.g.\ \cite[Lemma 1.4.1]{Pr:deg}.
As an inductive hypothesis, suppose that $L_{n-1}/K$ is a $G/H_{n-1}$-Galois extension.
To finish, it suffices to show that the
action of $c$ extends to an automorphism of $L_n$, i.e., 
that $c$ stabilizes the equation $y_n^p-y_n = f_n$ for $L_n/L_{n-1}$.
By Lemmas \ref{LWitt} and \ref{Lgcd}(ii), the action of $c$ scales every term of this equation by $\zeta^{j}$.
\end{proof}

\begin{lemma} \label{Lisom}
Suppose $L/K$ is a $G$-Galois extension as in Section \ref{Sfields}.
\begin{description}
\item{(i)}
There is a Witt vector $(x_1, \ldots, x_n)$ in standard form for the subextension $L/L_0$ 
and it is uniquely determined up to multiplication by $\mu_{m/m'}$.
\item{(ii)} 
There are $\varphi(m)/\varphi(m/m')$ different non-isomorphic 
$G$-Galois structures on the field extension $L/K$ 
such that the action of $\sigma$ on $L$ is as in (\ref{Eaction}). 
\end{description}
\end{lemma}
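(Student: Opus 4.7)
\emph{For (i),} the plan is to combine the uniqueness of Lemma \ref{Lgalois} with the ambiguity in the Kummer generator. For any $m$th root $x$ of $1/t$ in $L_0$, Lemma \ref{Lgalois} supplies a unique standard-form Witt vector $(x_1,\ldots,x_n)$ realizing the fixed $\sigma$-action. In the given $G$-Galois structure, $c(x)=\zeta x$ together with $x^m=1/t$ pins $x$ down only up to multiplication by $\mu_m$. I would then analyze the behavior under $x\mapsto \zeta^a x$: by Proposition \ref{Paction}, $c(x_i)=\zeta^j x_i$, so every monomial of $x_i$ (viewed as a polynomial in $x$) has degree congruent to $j$ modulo $m$. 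Rewriting each $x_i$ in terms of the new uniformizer therefore scales it by the common factor $\zeta^{-aj}$, and as $a$ varies over $\ZZ/m$ this scalar traces out $\mu_{m/\gcd(m,j)}=\mu_{m/m'}$ by Lemma \ref{Lgcd}(i), which is the claimed ambiguity.

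\emph{For (ii),} a $G$-Galois structure extending the prescribed $\sigma$-action is an isomorphism $\phi\colon G\to\Aut_K(L)$ with $\phi(\sigma)$ the given automorphism, and two such structures give isomorphic $G$-Galois extensions precisely when related by conjugation by an element of ${\rm Cent}_{\Aut_K(L)}(\sigma)=P\cdot\langle c^{m/m'}\rangle$. I would first parametrize $\phi(c)$ by its action on $L_0$: write $\phi(c)(x)=\zeta^{a_0}x$ with $\gcd(a_0,m)=1$. Expanding $\phi(c)\sigma\phi(c)^{-1}=\sigma^{\alpha'}$ inside $G$ yields $(\alpha')^{a_0}\equiv\alpha'\pmod{p^n}$, which is equivalent to $a_0\equiv 1\pmod{m/m'}$.

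Writing $\phi(c)=\sigma^e c^{a_0}$ with $e\in\ZZ/p^n$, I would then show that for each admissible $a_0$ the candidate lifts collapse to a single equivalence class. A short calculation inside $G$ gives $\sigma^f\phi(c)\sigma^{-f}=\sigma^{e+f(1-\alpha')}c^{a_0}$, and since $\alpha'\not\equiv 1\pmod p$ whenever $m\neq m'$, the factor $1-\alpha'$ is a unit in $\ZZ/p^n$, so $P$-conjugation is transitive on the choices of $e$ (in the degenerate case $m=m'$ the order-$m$ requirement directly forces $e=0$, again producing one lift). Conjugation by $c^{b(m/m')}$ fixes $\sigma^e c^{a_0}$, and the exponent $a_0$ is clearly invariant, so distinct admissible $a_0$'s give distinct orbits. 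The number of structures therefore equals the number of admissible $a_0$, namely the order of the kernel of the surjective reduction $(\ZZ/m)^*\twoheadrightarrow(\ZZ/(m/m'))^*$, which is $\varphi(m)/\varphi(m/m')$. The main obstacle is the orbit bookkeeping: checking carefully that the order-$m$ condition on $\phi(c)$ really forces $\gcd(a_0,m)=1$ (so that coprimality to $m/m'$ alone is not enough), and verifying that the $p^n$ a priori lifts for each $a_0$ genuinely fuse into one class under ${\rm Cent}_G(\sigma)$-conjugation.
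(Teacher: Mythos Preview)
Your argument is correct and follows essentially the same route as the paper. For part (i) it is identical: fix a Kummer generator, invoke Lemma~\ref{Lgalois} for uniqueness, then use Proposition~\ref{Paction} and Lemma~\ref{Lgcd}(i) to see that changing the Kummer generator by $\mu_m$ scales the Witt vector by $\mu_{m/m'}$. For part (ii) the paper packages the computation slightly more abstractly---it counts $\{\gamma\in\Aut(G):\gamma(\sigma)=\sigma\}$ modulo $\{\gamma\in{\rm Inn}(G):\gamma(\sigma)=\sigma\}$ and then splits into the abelian and non-abelian cases---whereas you compute the ${\rm Cent}_G(\sigma)$-orbits on admissible values of $\phi(c)=\sigma^e c^{a_0}$ directly; these are the same calculation, and your explicit orbit analysis (transitivity on $e$ via $1-\alpha'\in(\ZZ/p^n)^*$, invariance of $a_0$) is exactly what underlies the paper's count of $p^n\varphi(m)/\varphi(m/m')$ automorphisms over $p^n$ inner ones. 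One remark: your assertion that $\alpha'\not\equiv 1\pmod p$ whenever $m\neq m'$ is correct but deserves a word of justification---it uses $p\nmid m$, since the order of $\alpha'$ in $(\ZZ/p^n)^*$ divides $m$, so if $\alpha'\equiv 1\pmod p$ then $\alpha'$ has $p$-power order, hence order~$1$.
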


\begin{proof}
For part (i), 
by Lemma \ref{Lgalois}, for fixed $x$, there is a uniquely determined 
Witt vector $(x_1, \ldots, x_n)$ in standard form for the subextension $L/L_0$.  
Now $x$ is determined up to multiplication by $\zeta^d$, for $d \in \ZZ$. 
By Proposition \ref{Paction}, every monomial in $x_i$ has degree congruent to $j \bmod m$. 
Replacing $x$ with $\zeta^d x$ scales $x_i$ by $\zeta^{dj}$.
The values of $\zeta^{dj}$ range over $\mu_{m/m'}$ by Lemma \ref{Lgcd}(i).  

For part (ii),
a $G$-Galois structure on $L/K$ satisfying the requirement for $\sigma$ is determined by an 
isomorphism $\iota: G \to \Aut(L/K)$ such that 
$\iota(\sigma)(y_1, \ldots, y_n) = (y_1, \ldots, y_n) +' (1, 0, \ldots, 0)$.  
If $h \in \Aut(L/K)$, then the map $h: L \to L$ yields an isomorphism of
$G$-Galois extensions $L/K \to L/K$, the first with structure morphism $\iota$ and the second with structure morphism $h \iota h^{-1}$.  
Thus, modifying $\iota$ by an inner automorphism yields an isomorphic $G$-Galois structure on $L/K$.
So the number of isomorphism classes of $G$-Galois structures with this requirement on $\sigma$
is given by the number of elements of $\Aut(G)$ fixing $\sigma$, divided by the number of ${\rm Inn}(G)$ 
fixing $\sigma$.

An automorphism $\gamma$ of $G$ which fixes $\sigma$ is determined by $\gamma(c)$.  
Also $\gamma(c)$ must have order $m$ and have the same conjugation action as $c$ on $\sigma$, 
as determined by Lemma \ref{Lalpha}(ii).  
When $G$ is abelian, then $\alpha'=1$ and there are $\varphi(m)$ choices for $\gamma(c)$.  
This yields the count $\varphi(m)/\varphi(m/m')$ since $m'=m$ and since ${\rm Inn}(G)$ is trivial.  
If $G$ is non-abelian, then the image of $\gamma(c)$ in $M$ must have order $m$ 
and be congruent to $c$ modulo $\langle c^{m/m'} \rangle={\rm ker}(\nu)$.  
There are $p^n \varphi(m)/\varphi(m/m')$ choices for $\gamma(c)$.  
This yields the desired count, since there are $p^n$ inner automorphisms of $G$ which fix $\sigma$, 
namely conjugation by powers of $\sigma$. 
\end{proof}

\section{Main results} \label{Smain}

Let $G$ be a semi-direct product of the form $\ZZ/p^n \rtimes \ZZ/m$.
This section contains three results: first we prove that one can dominate a given Galois extension
having group $\ZZ/p^{n-1} \rtimes \ZZ/m$
by a Galois extension having group $\ZZ/p^n \rtimes \ZZ/m$, 
with control over the last upper jump; second, we give necessary and sufficient conditions 
for the ramification filtration of a $G$-Galois extension; third, we define a parameter space 
for $G$-Galois extensions of $K$ with given ramification filtration $\eta$ and calculate its dimension in terms of 
the upper jumps.

\subsection{A wild embedding problem}

We prove that one can embed a given Galois extension
having group $\ZZ/p^{n-1} \rtimes \ZZ/m$ by a Galois extension having group $\ZZ/p^n \rtimes \ZZ/m$,
with control over the last upper jump.  
See \cite[24.42]{FJ} for an earlier version of this result, in which $m=1$ and there is no 
control over the upper jump.  
Recall that $G/H_{n-1}$ is a semi-direct product of the form $\ZZ/p^{n-1} \rtimes \ZZ/m$.

\begin{proposition} \label{Pdominate}
Suppose $L_{n-1}/K$ is a $G/H_{n-1}$-Galois extension with upper jumps $u_1 \leq \cdots \leq u_{n-1}$.
Let $u_n \in \frac{1}{m}\NN$ be such that either $u_n = p u_{n-1}$ or both $u_n > pu_{n-1}$ and $p \nmid mu_n$.
Suppose also that $mu_n \equiv mu_1 \bmod m$.
Then there exists a $G$-Galois extension $L_n/K$ with upper jumps $u_1 \leq \cdots \leq u_n$ that dominates $L_{n-1}/K$.
\end{proposition}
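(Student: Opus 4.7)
The plan is to construct $L_n$ from $L_{n-1}$ by adjoining one further Artin--Schreier--Witt coordinate $x_n$, chosen so as to simultaneously achieve the prescribed last upper jump $u_n$ and allow the tame $\ZZ/m$-action to extend.

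By Lemma \ref{Lgalois} there is a unique standard-form Witt vector $(x_1,\ldots,x_{n-1}) \in W^{n-1}(L_0)$ describing $L_{n-1}/L_0$, and by Lemma \ref{Lcomparejumps} its upper jumps over $L_0$ are $mu_1 \leq \cdots \leq mu_{n-1}$. Since $L_{n-1}/K$ is $G/H_{n-1}$-Galois, Proposition \ref{Paction} gives $c(x_i)=\zeta^{j}x_i$ for $1 \leq i \leq n-1$, where $j=mu_1$ is the first lower jump. I would then set
\[
x_n \;=\; \begin{cases} 0 & \text{if } u_n = pu_{n-1},\\ x^{mu_n} & \text{if } u_n > pu_{n-1}, \end{cases}
\]
recalling that $x \in L_0$ satisfies $x^m = 1/t$ and $c(x) = \zeta x$. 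Since $p \nmid mu_n$ in the second case, $(x_1,\ldots,x_n)$ remains in standard form, and the Artin--Schreier--Witt equations with this Witt vector define a $\ZZ/p^n$-Galois extension $L_n/L_0$ containing $L_{n-1}/L_0$ as the fixed field of $H_{n-1}$.

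Next I would compute the upper jumps of $L_n/L_0$. By stability of upper numbering under quotients \cite[IV, Prop.\ 14]{Se:lf}, the first $n-1$ upper jumps of $L_n/L_0$ coincide with those of $L_{n-1}/L_0$, namely $mu_1,\ldots,mu_{n-1}$. Lemma \ref{Lconductor} identifies the last upper jump as $\max\{-p^{n-i}v_0(x_i)\}_{i=1}^n$: the maximum over $i<n$ equals $p \cdot mu_{n-1}$ by the same lemma applied to $L_{n-1}/L_0$, while the contribution from $x_n$ is either $-\infty$ or $mu_n$, so the two cases of the hypothesis on $u_n$ together force the overall maximum to equal $mu_n$.

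To conclude, Proposition \ref{Paction} promotes $L_n/L_0$ to a $G$-Galois extension $L_n/K$, provided $c(x_n)=\zeta^{j}x_n$. This is automatic when $x_n=0$, and in the other case $c(x_n)=\zeta^{mu_n}x_n$, so the required equality $\zeta^{mu_n}=\zeta^{mu_1}$ is precisely the congruence hypothesis $mu_n \equiv mu_1 \pmod m$. The same Herbrand computation used in the proof of Lemma \ref{Lcomparejumps} then translates the upper jumps $mu_1 \leq \cdots \leq mu_n$ of $L_n/L_0$ into the desired upper jumps $u_1 \leq \cdots \leq u_n$ of $L_n/K$. The main subtlety is that the single element $x_n$ has to serve two masters at once --- its valuation must produce the correct last jump, and its $c$-eigenvalue must allow the tame action to extend --- and the congruence $mu_n \equiv mu_1 \pmod m$ is exactly what reconciles these two demands.
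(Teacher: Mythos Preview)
Your proof is correct and follows essentially the same route as the paper: choose $x_n$ to be $0$ or $x^{mu_n}$, verify the resulting Witt vector is in standard form, compute the last upper jump via Lemma~\ref{Lconductor}, and invoke the converse direction of Proposition~\ref{Paction} to extend the $\ZZ/m$-action. One small omission: Proposition~\ref{Paction} requires both $c(x_n)=\zeta^{j}x_n$ and $c(y_n)=\zeta^{j}y_n$, so you should explicitly \emph{define} the action of $c$ on the new generator by $c(y_n):=\zeta^{j}y_n$ before invoking the proposition (the paper does this). Your verification of the first $n-1$ upper jumps via stability under quotients is in fact slightly cleaner than the paper's appeal to the external reference \cite[Thm.~1.1]{Ga:ls}.
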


\begin{proof}
Without loss of generality, one can suppose $L_0 \simeq K[x]/(x^m - 1/t)$ and $c(x) = \zeta x$.
The $\ZZ/p^{n-1}$-Galois extension $L_{n-1}/L_0$ has upper jumps $mu_1 \leq \cdots \leq mu_{n-1}$ 
by Lemma \ref{Lcomparejumps}.  
By Section \ref{SWitt}, $L_{n-1}/L_0$ is given by a Witt vector equation
$(y_1^p, \ldots, y_{n-1}^p) = (y_1, \ldots, y_{n-1}) +' (x_1, \ldots, x_{n-1})$
for some $x_i \in L_0$, such that $x_1$ is not a $\wp$th power in $L_0$.  
Furthermore,  
one can choose $(x_1, \ldots, x_{n-1})$ to be in standard form.
In particular, if $x_i \ne 0$, then $p \nmid v_0(x_i)$.

By Proposition \ref{Paction}, if $1 \leq i \leq n-1$, 
then $c(x_i) = \zeta^{j}x_i$ and $c(y_i) = \zeta^{j}y_i$ where $j=mu_1$.
By Lemma \ref{Lconductor}, $mu_{n-1}={\rm max}\{-p^{n-i}v_0(x_i)\}_{i=1}^{n-1}$.

If $u_n \ne pu_{n-1}$, let $x_n = x^{mu_n}$.  
In this case, $-v_0(x_n) = mu_n$. 
If $u_n = pu_{n-1}$, let $x_n = 0$.
In this case, $-v_0(x_n) = -\infty < pmu_{n-1}$.
In both cases, $(x_1, \ldots, x_n)$ is a Witt vector in standard form.
Then the Witt vector equation $(y_1^p, \ldots, y_n^p) = (y_1, \ldots, y_n) +' (x_1, \ldots, x_n)$ yields a 
$P$-Galois extension $L_n/L_0$ dominating $L_{n-1}/L_0$,
with upper jumps $mu_1 \leq \cdots \leq mu_n$ by Lemma \ref{Lconductor} (i.e., \cite[Thm.\ 1.1]{Ga:ls}).  

By the definition of $x_n$, then $c(x_n) = \zeta^{j}x_n$.  Let $c(y_n) = \zeta^{j}y_n$.
By Proposition \ref{Paction}, $L_n/K$ is a $G$-Galois extension dominating $L_{n-1}/K$,
and it has upper jumps $u_1 \leq \cdots \leq u_n$ by Lemma \ref{Lcomparejumps}.
\end{proof}

\subsection{Conditions on the ramification filtration} \label{Smaintheorem}

The ramification filtration of a Galois extension with group $G$ of the form $\ZZ/p^n \rtimes \ZZ/m$
is completely determined by either its lower or upper jumps.  
Here are the statement and proof of Theorem \ref{Tmaintheorem}, giving necessary and sufficient conditions
on the ramification filtrations of $G$-Galois extensions of $K$.

\begin{theorem} \label{Tnecsuff}
Let $G$ be a semi-direct product of the form $\ZZ/p^n \rtimes \ZZ/m$ where $p \nmid m$.
Let $\sigma \in G$ have order $p^n$ and let $m'=|{\rm Cent}_G(\sigma)|/p^n$.
A sequence $u_1 \leq \cdots \leq u_n$ of rational numbers
occurs as the set of positive breaks in the upper numbering of the ramification filtration
of a $G$-Galois extension of $k((t))$ if and only if:
\begin{description}
\item{(a)} $u_i \in \frac{1}{m} \NN$ for $1 \leq i \leq n$;
\item{(b)} $\gcd(m, mu_1)=m'$;
\item{(c)} $p \nmid mu_1$ and, for $1 < i \leq n$, either $u_i=pu_{i-1}$ or both $u_i > pu_{i-1}$ and $p \nmid mu_i$;
\item{(d)} and $mu_i \equiv mu_1 \bmod m$ for $1 \leq i \leq n$.
\end{description}
\end{theorem}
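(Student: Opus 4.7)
The proof splits into necessity and sufficiency, each largely an assembly of results already established in the paper, bridged by the dictionary of Lemma \ref{Lcomparejumps} between the upper jumps of $L/K$ and those of the cyclic subextension $L/L_0$.

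For necessity, suppose $L/K$ is a $G$-Galois extension with upper jumps $u_1 \leq \cdots \leq u_n$. Lemma \ref{Lcomparejumps} identifies the upper jumps of $L/L_0$ as $w_i = m u_i$. Then (a) follows because the $w_i$ are integers (Hasse--Arf, or directly Lemma \ref{Lcyclicjumps}); condition (c) is Lemma \ref{Lcyclicjumps} applied to the $\ZZ/p^n$-Galois extension $L/L_0$; and condition (d) is Proposition \ref{Pcongruence}(ii). For condition (b), Herbrand's formula gives the first lower jump of $L/K$ as $j_1 = m u_1$, and Lemma \ref{Lgcd}(i), applied to the $J_1$-Galois subextension $L_1/K$, yields $\gcd(m, j_1) = m'$.

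For sufficiency, I would proceed by induction on $n$. The base case $n = 0$ is the tame extension $L_0 = K[x]/(x^m - 1/t)$ with $c(x) = \zeta x$, a $\ZZ/m$-Galois extension of $K$ with no positive upper jumps. For the inductive step, conditions (a), (c), (d) for the truncated sequence $u_1 \leq \cdots \leq u_{n-1}$ are immediate consequences of the same conditions for the full sequence, while condition (b) is unchanged. After verifying that the value of $m'$ is the same for $G$ and for the quotient $G/H_{n-1}$ (which holds because the conjugation character $\nu \colon M \to \Aut(P)$ factors through $\FF_p^*$, so reducing modulo $p^{n-1}$ does not alter $\ker(\nu)$), the inductive hypothesis produces a $G/H_{n-1}$-Galois extension $L_{n-1}/K$ realizing $u_1, \ldots, u_{n-1}$. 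Proposition \ref{Pdominate} then applies directly, since conditions (a), (c), (d) restricted to the index $n$ are exactly its hypotheses, and yields the desired $G$-Galois extension $L_n/K$ with upper jumps $u_1 \leq \cdots \leq u_n$.

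The main obstacle is therefore mostly bookkeeping: the substantive content lives in Propositions \ref{Pcongruence} and \ref{Pdominate}, and what remains is to confirm that the hypotheses of these results are met at each step and that the semi-direct product structure propagates consistently through the inductive tower (in particular, that the $\alpha$ realized at the base of the induction, up to a permissible change of generator of $M$ as in Lemma \ref{Lalpha}(ii), coincides with the $\alpha$ of the given $G$).
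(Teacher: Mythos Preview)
Your proposal is essentially the paper's own argument. The necessity half is identical---Lemma \ref{Lcomparejumps} plus Hasse--Arf for (a), Lemma \ref{Lgcd}(i) for (b), Lemma \ref{Lcyclicjumps} for (c), and Proposition \ref{Pcongruence}(ii) for (d). For sufficiency the paper also inducts via Proposition \ref{Pdominate}, so the strategy matches.

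The one place where the paper proceeds slightly differently is the base of the induction. You start at $n=0$ and invoke Proposition \ref{Pdominate} already for the passage $n=0 \to n=1$; but Proposition \ref{Pdominate} does not list condition (b) among its hypotheses, and its proof (via Proposition \ref{Paction}) tacitly uses $\zeta^{mu_1}=\alpha^{-1}$, which for $n\geq 2$ is inherited from the given $G/H_{n-1}$-extension but for $n=1$ is not yet available. You flag this yourself in the final paragraph as the $\alpha$-matching bookkeeping. The paper simply absorbs this step by building the $J_1$-extension explicitly: it uses condition (b) to see that $\zeta^{mu_1}$ and $\alpha^{-1}$ both have order $m/m'$, picks $\beta$ with $\zeta^{\beta mu_1}=\alpha^{-1}$, and declares $c(x)=\zeta^{\beta}x$ on $L_0$ before setting $y_1^p-y_1=x_1$ with $x_1\in x^{mu_1}k[[x^{-m}]]^*$; from there Proposition \ref{Pdominate} is invoked only for $n\geq 2$. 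So your outline is correct, but to make the $n=1$ step rigorous you should do exactly what the paper does (or equivalently, change the generator of $M$ as you suggest via Lemma \ref{Lalpha}(ii)) rather than citing Proposition \ref{Pdominate} there.
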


\begin{proof}
Conditions (a)-(d) are necessary:
let $u_1 \leq \cdots \leq u_n$ be the set of upper jumps of a $G$-Galois extension of $k((t))$.
The upper jumps of the $\ZZ/p^n$-subextension $L/L_0$ are $w_1 \leq \cdots \leq w_n$
where $w_i=mu_i$ by Lemma \ref{Lcomparejumps}.  
Condition (a) follows since $w_i \in \NN$ by the Hasse-Arf Theorem.
Condition (b) follows from Lemma \ref{Lgcd}(i).
Condition (c) is due to \cite{Schmid}, see Lemma \ref{Lcyclicjumps}.
Condition (d) follows from Proposition \ref{Pcongruence}(ii).

Conditions (a)-(d) are sufficient:
recall that $G$ has generators $\sigma$ (of order $p^n$) and $c$ (of order $m$) 
and $c\sigma c^{-1}=\sigma^{\alpha'}$ for some integer $\alpha'$ such that 
$1 \leq \alpha' < p^n$ and $p \nmid \alpha'$.
Let $\alpha \in \FF_p^* \simeq (\ZZ/p)^*$ be such that $\alpha \equiv \alpha' \bmod p$.
Let $j=mu_1$.
By condition (b), $\zeta^j$ has order $m/m'$ in $k^*$.
Likewise, $\alpha^{-1}$ has order $m/m'$ in $k^*$.
Thus there exists an integer $\beta$ such that $\zeta^{\beta j}=\alpha^{-1}$.

Consider the $\langle c \rangle$-Galois extension $L_0/K$ with equation $x^m=1/t$ and Galois action $c(x)=\zeta^\beta x$. 
Let $x_1 \in x^jk[[x^{-m}]]^*$.  
Consider the $\ZZ/p$-Galois extension $L_1/L$ with equation $y_1^p-y_1=x_1$ and Galois action $\sigma(y_1)=y_1+1$.
By \cite[Lemma 1.4.1]{Pr:deg}, $L_1/K$ is a $J_1$-Galois extension. 
It has lower jump $j$ and thus upper jump $u_1$.
By conditions (a), (c), (d), and Proposition \ref{Pdominate}, there exists a $G$-Galois extension $L/K$ 
dominating $L_1/K$ with upper jumps $u_1 \leq \cdots \leq u_n$.
\end{proof}

\begin{corollary} 
Let $G$ be a semi-direct product of the form $\ZZ/p^n \rtimes \ZZ/m$ where $p \nmid m$.
Suppose $\eta$ is a ramification filtration of $G$ satisfying conditions (a)-(d).
Let $f$ be the order of $p$ modulo $m/m'$ and let $q=p^f$.
Then there exists a $G$-Galois extension $L/K$ with ramification filtration $\eta$ which is defined over $\FF_q$. 
\end{corollary}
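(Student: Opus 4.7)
The plan is to revisit the explicit construction of the $G$-Galois extension from the proof of Theorem \ref{Tnecsuff} and verify that its defining data descend to $\FF_q((t))$. Recall from that proof that $L_0/K$ is produced via $x^m = 1/t$ together with the tame action $c(x) = \zeta^\beta x$, and the cyclic $p$-power tower $L/L_0$ is produced by Witt-vector equations $(y_1^p, \ldots, y_n^p) = (y_1, \ldots, y_n) +' (x_1, \ldots, x_n)$ in which every $x_i \in L_0$ is chosen to be either $0$ or the pure monomial $x^{mu_i}$, according to whether $u_i = pu_{i-1}$. I would set $L^\circ$ to be the $\FF_q((t))$-algebra generated by $x, y_1, \ldots, y_n$ subject to the same relations, and show that $L^\circ$ is a field whose base change to $k((t))$ recovers the $G$-Galois extension $L/K$ with ramification filtration $\eta$.

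First I would observe that, by Lemma \ref{LWitt}, the polynomials $\bar{S}_{i-1}$ have $\FF_p$-coefficients, so together with the monomial form of the $x_i$'s, every defining relation of $L^\circ$ has coefficients in $\FF_p \subseteq \FF_q$; the $\sigma$-action by Witt-vector shift is likewise $\FF_p$-rational. The main content is then verifying that $L^\circ$ is still a field. I would do this inductively: $x^m - 1/t$ is irreducible over $\FF_q((t))$ because $v_t(1/t) = -1$ is coprime to $m$ (so $1/t$ is not an $\ell$-th power for any prime $\ell \mid m$); and at each subsequent Artin-Schreier stage $y_i^p - y_i = f_i$, condition (c) combined with the monomial choice of $x_i$ forces the $v_0$-valuation of $f_i$ to be negative and prime to $p$, so $f_i \notin \wp(L_{i-1}^\circ)$ by the valuation bookkeeping underlying Lemma \ref{Lconductor}.

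Once $L^\circ$ is known to be a field, its ramification index over $\FF_q((t))$ equals $mp^n = |G|$, so $L^\circ/\FF_q((t))$ is totally ramified. The totally ramified $L^\circ/\FF_q((t))$ is linearly disjoint from the unramified extension $k((t))/\FF_q((t))$, so $L^\circ \otimes_{\FF_q((t))} k((t))$ is a field, and the construction in Theorem \ref{Tnecsuff} endows it with the desired $G$-Galois structure and upper jumps $u_1 \leq \cdots \leq u_n$. The main technical obstacle is the inductive valuation argument at each Artin-Schreier stage, since the analogous analysis in Theorem \ref{Tnecsuff} was carried out over $k((x^{-1}))$ rather than the smaller field $\FF_q((x^{-1}))$; however, the relevant valuation invariants $v_0(x_i) = -mu_i$ and the standard-form identities for $f_i$ are expressible as polynomial identities over $\FF_p$, so the argument transfers without modification.
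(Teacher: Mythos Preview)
Your argument has a genuine gap: you never use the hypothesis $q = p^f$ with $f$ the order of $p$ modulo $m/m'$. Everything you check---the Witt polynomials $\bar S_{i-1}$, the monomials $x_i = x^{mu_i}$, the $\sigma$-action by Witt shift---lives over $\FF_p$, so your argument, if correct, would prove the extension is already defined over $\FF_p$. That cannot be the content of the corollary, since the specific value of $q$ would then be irrelevant.

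What you are missing is the descent of the $c$-action. The statement ``defined over $\FF_q$'' means that the equations \emph{and} the full $G$-Galois action have coefficients in $\FF_q$, not merely that the underlying field descends. The paper's proof singles out precisely this point: by Proposition~\ref{Paction} one has $c(y_i) = \zeta^{j_1} y_i$ (with $j_1 = mu_1$), and by Lemma~\ref{Lgcd}(i) the element $\zeta^{j_1}$ has order $m/m'$ in $k^*$. The definition of $f$ then guarantees $\mu_{m/m'} \subset \FF_{p^f} = \FF_q$, so $\zeta^{j_1} \in \FF_q$ and the $c$-action on the $y_i$'s is $\FF_q$-rational. This is the entire reason $\FF_q$, rather than $\FF_p$, appears in the statement. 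Your proposal treats only the $\sigma$-action and the defining equations, so the role of $q$ never surfaces. The inductive field-ness verification you sketch is correct but peripheral; the heart of the corollary is the rationality of the tame action on the Artin--Schreier--Witt generators.
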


\begin{proof}
It suffices to produce a $G$-Galois extension $L/K$ whose equations and 
Galois action have coefficients in $\FF_q$.
Note that $\zeta^{j_1}$ has order $m/m'$ in $k^*$.
By the definition of $f$, the field $\FF_{p^f}$ contains the $(m/m')$th roots of unity, and thus contains $\zeta^{j_1}$.  
The case $n=1$ follows by direct computation with the equation $y_1^p-y_1=x_1^{mu_1}$, see \cite[Lemma 1.4.1]{Pr:deg}.
The result then proceeds by induction on $n$.  For the inductive step, one produces an equation for 
the extension $L/L_{n-1}$ using Proposition \ref{Pdominate}.  
In the proof of that result, recall that $x_n \in \FF_p[x]$ by definition.
Thus the equation has coefficients in $\FF_p$ by Lemma \ref{LWitt}.
The Galois action is defined over $\FF_q$ by (\ref{Eaction}) and Proposition \ref{Paction}.
\end{proof}

\subsection{Parameter space for $G$-Galois extensions} \label{Sparameter}

Given a sequence $u_1 \leq \cdots \leq u_n$ satisfying conditions (a)-(d),
let $\eta$ be the ramification filtration of $G$ having upper jumps $u_1 \leq \cdots \leq u_n$. 
By Theorem \ref{Tnecsuff}, there exists a $G$-Galois extension of $k((t))$ 
with ramification filtration $\eta$.
We prove there is a scheme ${\mathcal M}_\eta$ such that there is a natural bijection between the $k$-points of 
${\mathcal M}_\eta$ and isomorphism classes of $G$-Galois extensions of $k((t))$ with 
ramification filtration $\eta$.  We calculate the dimension of ${\mathcal M}_\eta$ in terms of
the sequence $u_1 \leq \cdots \leq u_n$.

\begin{notation}
Given positive integers $w$ and $m$, let 
$$\epsilon_p(w,m)=\#\{e \in \ZZ \ | \ 1 \leq e \leq w, \ e \equiv w \bmod m, \ p \nmid e\}.$$ 
\end{notation}

\begin{lemma}
Let $\delta_p(w,m) = 1$ if $w \equiv ap \bmod m$ for some $1 \leq a \leq r$, where 
$r$ is the remainder when $\lfloor w/p \rfloor$ is divided by $m$, and $\delta_p(w, m) = 0$ otherwise.
Then $\epsilon_p(w,m)=\lceil w/m \rceil - \lfloor w/mp \rfloor - \delta_p(w,m)$. 
\end{lemma}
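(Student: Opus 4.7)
The plan is to write $\epsilon_p(w,m)$ as a difference: count all $e \in \{1,\dots,w\}$ with $e \equiv w \pmod m$, then subtract the sub-count of those divisible by $p$. The first count is completely standard; the second is where $\delta_p(w,m)$ will appear, as the contribution from an incomplete residue block.

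First I would compute $A := \#\{e : 1 \leq e \leq w,\ e \equiv w \bmod m\}$. The elements form the arithmetic progression $w, w-m, w-2m, \ldots$ down to the least positive one, so $A = \lfloor (w-1)/m \rfloor + 1$, which equals $\lceil w/m \rceil$ in both cases $m \mid w$ and $m \nmid w$.

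Next I would compute $B := \#\{e : 1 \leq e \leq w,\ e \equiv w \bmod m,\ p \mid e\}$. Writing $e = pa$, the condition becomes $a \in \{1,\ldots,N\}$ with $pa \equiv w \bmod m$, where $N = \lfloor w/p \rfloor$. Since $p \nmid m$ by the standing hypothesis of the paper, $\gcd(p,m) = 1$, so multiplication by $p$ is a bijection on $\ZZ/m\ZZ$. Hence in any block of $m$ consecutive integers there is exactly one $a$ with $pa \equiv w \bmod m$. Writing $N = qm + r$ with $q = \lfloor N/m \rfloor = \lfloor w/mp \rfloor$ and $0 \leq r < m$, the full blocks in $\{1,\dots,qm\}$ contribute exactly $q$ solutions. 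On the leftover block $\{qm+1,\dots,qm+r\}$ (empty if $r=0$), the substitution $b = a - qm$ shows the number of extra solutions equals $\#\{b : 1 \leq b \leq r,\ pb \equiv w \bmod m\}$, which is $0$ or $1$ and matches the definition of $\delta_p(w,m)$ exactly. Thus $B = \lfloor w/mp \rfloor + \delta_p(w,m)$, and subtracting gives the claimed formula $\epsilon_p(w,m) = A - B = \lceil w/m \rceil - \lfloor w/mp \rfloor - \delta_p(w,m)$.

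There is no real obstacle here; the argument is a bookkeeping exercise. The only subtlety worth double-checking is that $\delta_p(w,m)$ is indeed $\{0,1\}$-valued in the partial block, which follows because the residues $p, 2p, \ldots, rp$ modulo $m$ are distinct (again using $\gcd(p,m)=1$), so the congruence $w \equiv ap \bmod m$ can hold for at most one $a$ in $\{1,\dots,r\}$.
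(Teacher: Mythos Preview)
Your proof is correct and follows essentially the same approach as the paper's: both count all $e\in\{1,\dots,w\}$ with $e\equiv w\bmod m$ as $\lceil w/m\rceil$, then subtract the number of such $e$ divisible by $p$ by writing $\lfloor w/p\rfloor=qm+r$ and separating full blocks from the remainder. Your version is slightly more explicit in invoking $\gcd(p,m)=1$ to justify that each full block of $m$ consecutive values of $a$ contributes exactly one solution and the leftover block at most one, whereas the paper leaves this implicit.
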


\begin{proof}
The number of integers $e$ such that $1 \leq e \leq w$ and $e \equiv w \bmod m$ is $\lceil w/m \rceil$.  
To count the number of these which are divisible by $p$, 
consider the set $A = \{p, 2p, \ldots, \lfloor w/p \rfloor p \}$.
Then $A$ contains at least $\lfloor\lfloor w/p \rfloor /m \rfloor = \lfloor w/mp \rfloor$ elements $e$ such that  $e \equiv w \bmod m$. 
Let $r$ be the remainder when $\lfloor w/p \rfloor$ is divided by $m$.  
Then $A$ contains one additional element $e \equiv w \bmod m$ if and only if an element of 
$\{p, 2p, \ldots, rp\}$ is congruent to $w$ modulo $m$.
The formula holds since $\delta_p(w,m) = 1$ precisely in this case.
\end{proof}

Given a positive integer $N$, the root of unity $\zeta_{m/m'}$ acts on the affine variety ${\mathbb A}^N$ 
via multiplication on each 
coordinate.  Let ${\mathbb A}^N/\mu_{m/m'}$ denote the quotient.

\begin{theorem} \label{Tparameter}
Let $G$ be a semi-direct product of the form $\ZZ/p^n \rtimes \ZZ/m$ where $p \nmid m$.
Let $u_1 \leq \cdots \leq u_n$ be a sequence satisfying conditions (a)-(d)
and $\eta$ be the ramification filtration of $G$ with upper jumps $u_1 \leq \cdots \leq u_n$. 
Let $N_\eta=\sum_{i=1}^n \epsilon_p(mu_i, m)$.  
Then there is an open subscheme $U_\eta \subset {\mathbb A}^{N_\eta}/\mu_{m/m'}$ 
and a finite \'{e}tale map $\pi: {\mathcal M}_{\eta} \to U_{\eta}$ of degree $\varphi(m)/\varphi(m/m')$
such that the $k$-points of ${\mathcal M}_{\eta}$ are in natural bijection with isomorphism classes of 
$G$-Galois extensions of $k((t))$ with ramification filtration $\eta$.
\end{theorem}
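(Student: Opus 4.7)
The plan is to parametrize $G$-Galois extensions $L/K$ with ramification filtration $\eta$ via the Witt-vector description of their $\ZZ/p^n$-subextensions $L/L_0$, and then take appropriate quotients and covers. Normalize $L_0 \simeq K[x]/(x^m - 1/t)$ with $c(x) = \zeta x$. By Lemma \ref{Lgalois}, the extension $L/L_0$ with $\sigma$-action as in (\ref{Eaction}) is uniquely specified by a standard-form Witt vector $(x_1, \ldots, x_n) \in k[x]^n$. By Proposition \ref{Paction}, $L/K$ is $G$-Galois with the chosen $c$ if and only if $c(x_i) = \zeta^j x_i$ for each $i$, where $j = mu_1$. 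Since $c$ scales $x^e$ by $\zeta^e$, every exponent $e$ appearing in $x_i$ must satisfy $e \equiv j \equiv mu_i \bmod m$ (the last congruence by (d)). Combined with the standard-form restriction $p \nmid e$ and with the jump bounds from Lemmas \ref{Lconductor} and \ref{Lcyclicjumps} (degree exactly $mu_i$ with nonzero leading coefficient when $u_i > pu_{i-1}$; degree strictly less than $mu_i$ when $u_i = pu_{i-1}$, since then $p \mid mu_i$), the allowed exponents for $x_i$ form a set of cardinality exactly $\epsilon_p(mu_i, m)$.

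Assemble these coefficients into affine coordinates: let $V_\eta \subset \mathbb{A}^{N_\eta}$ be the open subscheme cut out by the condition that the coefficient of $x^{mu_i}$ is nonzero whenever $u_i > pu_{i-1}$. Each point of $V_\eta$ yields, via the Witt-vector construction (as in Proposition \ref{Pdominate}), a $G$-Galois extension with ramification filtration exactly $\eta$. Lemma \ref{Lisom}(i) identifies the remaining ambiguity: replacing $x$ by $\zeta^d x$ preserves $L/K$ and the $\sigma$-action but multiplies the Witt vector diagonally by $\zeta^{dj}$; as $d$ ranges over $\ZZ/m$, these scalars range through $\mu_{m/m'}$ by (b). This gives a diagonal $\mu_{m/m'}$-action on $V_\eta$, and it is free since the leading coefficient of $x_1$ is nonzero on $V_\eta$ (note $u_1 > 0 = p u_0$). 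Set $U_\eta := V_\eta/\mu_{m/m'} \subset \mathbb{A}^{N_\eta}/\mu_{m/m'}$; its $k$-points biject with $G$-Galois extensions of ramification $\eta$ equipped with the chosen $\sigma$- and $c$-actions.

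Finally, Lemma \ref{Lisom}(ii) shows each such underlying field extension admits exactly $\varphi(m)/\varphi(m/m')$ non-isomorphic $G$-Galois structures compatible with the chosen $\sigma$-action. Bundling these over $U_\eta$ produces the required finite étale cover $\pi: \mathcal{M}_\eta \to U_\eta$ of degree $\varphi(m)/\varphi(m/m')$ whose $k$-points parametrize isomorphism classes of $G$-Galois extensions of $k((t))$ with ramification filtration $\eta$. The main obstacle I expect is the bookkeeping around two layers of equivalence: verifying freeness of the $\mu_{m/m'}$-action on $V_\eta$ so that $U_\eta$ is a bona fide scheme-theoretic quotient (handled above via the nonvanishing of the leading coefficient of $x_1$), and then constructing $\mathcal{M}_\eta$ in a functorial way that genuinely encodes the $\Aut(G)_\sigma/{\rm Inn}(G)_\sigma$-torsor of Galois structures from Lemma \ref{Lisom}(ii), rather than presenting it merely as an abstract disjoint union of $\varphi(m)/\varphi(m/m')$ copies of $U_\eta$.
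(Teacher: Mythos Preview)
Your proposal is correct and follows essentially the same approach as the paper: reduce via Lemma~\ref{Lisom} to parametrizing standard-form Witt vectors satisfying the $\zeta^j$-scaling condition from Proposition~\ref{Paction}, then count the admissible exponents in each $x_i$ using Lemma~\ref{Lconductor} to obtain $\epsilon_p(mu_i,m)$ and assemble into an open subscheme of $\mathbb{A}^{N_\eta}$. The paper organizes this as an explicit induction on $n$ and is terser about the $\mu_{m/m'}$-quotient and the \'etale cover (invoking Lemma~\ref{Lisom} in a single sentence), whereas you spell out the freeness of the diagonal action and the role of Lemma~\ref{Lisom}(ii) more carefully; both routes lead to the same construction.
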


It is clear that ${\rm dim}({\mathcal M}_\eta)=N_\eta$ depends only on $p, m, u_1, \ldots, u_n$.

\begin{proof}
By Lemma \ref{Lisom}, it suffices to show that the collection of Witt vectors $(x_1, \ldots, x_n)$ in standard
form, which, as in Proposition \ref{Paction}, yield $G$-Galois extensions $L/K$ with ramification invariants 
$u_1 \leq \cdots \leq u_n$, is in natural bijection with the $k$-points of an open subscheme of ${\mathbb A}^{N_{\eta}}$.

The proof is by induction on $n$.  
For the case $n=1$, Lemma \ref{Lconductor} shows that $x_1 \in k[x]$ must have degree $mu_1$. 
By Proposition \ref{Paction}, the extension $L_1/K$ is $J_1$-Galois if and only if 
$c(x_1) = \zeta^{mu_1}x_1$, in other words, if and only if all exponents of $x_1$ are congruent 
to $mu_1$ modulo $m$.  Since $x_1$ is in standard form, it has no exponents with degree divisible by $p$.  
Thus the number of possible exponents is $\epsilon = \epsilon_p(mu_1, m)$.  
Since the leading coefficient of $x_1$ is nonzero, the choice of $x_n$ is equivalent to the choice of a 
$k$-point in an open subscheme of ${\mathbb A}^{\epsilon}$.  (See also \cite[Proposition 2.2.6]{Pr:deg}).

Now, suppose that $(x_1, \ldots, x_{n-1})$ is a Witt vector in standard form, which yields a
$G/H_{n-1}$-Galois extension $L_{n-1}/K$ with upper jumps $u_1 \leq \cdots \leq u_{n-1}$.   
Let $\epsilon=\epsilon_p(mu_n, m)$.
It suffices to show that Witt vectors $(x_1, \ldots, x_n)$ in standard form which yield 
an extension $L/K$ dominating 
$L_{n-1}/K$ with upper jumps $u_1 \leq \cdots \leq u_n$ are in natural bijection with the $k$-points of an open subscheme 
$\tilde{U}_n \subset {\mathbb A}^\epsilon$.

The Witt vector $(x_1, \ldots, x_n)$ for the extension $L/K$ is determined by the choice of
$x_n \in k[x]$ in standard form. 
By Proposition \ref{Paction}, the extension $L/K$ is $G$-Galois if and only if 
$c(x_n) = \zeta^{mu_1}x_n$, in other words, if and only if all exponents of $x_n$ are congruent 
to $mu_1$ modulo $m$.  Recall that $mu_1 \equiv mu_n \bmod m$ by Proposition \ref{Pcongruence}.

By Lemma \ref{Lconductor}, the extension $L/K$ has upper jump $u_n$ if and only if
${\rm deg}(x_n)=-v_0(x_n) \leq mu_n$, where equality must hold if $u_n > pu_{n-1}$.
Thus, an exponent $e$ appearing in $x_n$ satisfies $0 \leq e \leq mu_n$, and $e \equiv mu_n \bmod m$, and $p \nmid e$.  
The number of these exponents is 
$\epsilon = \epsilon_p(mu_n, m)$.  The leading coefficient of $x_n$ must be non-zero when $u_n > pu_{n-1}$.
The choice of $x_n$ is thus equivalent to the choice of a $k$-point in an open subscheme of ${\mathbb A}^{\epsilon}$.
\end{proof}

\begin{remark}
Consider the contravariant functor $F_\eta$ from the category of schemes to sets, which associates to a scheme
$B$ the set of $G$-Galois extensions of ${\mathcal O}_B((t))$ whose geometric fibres have ramification filtration $\eta$.
The scheme ${\mathcal M}_\eta$ does not represent $F_\eta$ on the category of $k$-schemes because
there are non-constant $G$-Galois covers defined over a base scheme $B$, which become constant
after pullback by a finite morphism $B' \to B$. 
The scheme ${\mathcal M}_\eta$ is a fine moduli space for $F_\eta$ 
on a category where such morphisms are trivialized; see \cite[Thm.\ 2.2.10]{Pr:deg} for the case $n=1$.
\end{remark}

\begin{remark}
In \cite[Prop.\ 4.1.1]{BM}, the authors calculate the dimension of the tangent space of the versal 
deformation space of a $\ZZ/p^n$-Galois extension in terms of its ramification filtration.  
Theorem \ref{Tparameter} is less technical than their result and it is not clear how to compare them
directly. 
\end{remark} 

\section{Equations for $\ZZ/p^3$-Galois extensions} \label{Sp3}

It is well-known that the methods of Section \ref{SWitt} can be used to find equations for $\ZZ/p^n$-extensions \cite{Schmid2}, 
but the equations themselves are difficult to find in the literature.
Here are formulae for the general $\ZZ/p^3$-Galois extension of $K$.

\begin{example} \label{Pp3}
Suppose $L/K$ is a $\ZZ/p^3$-Galois extension of $K \cong k((t))$.
Then there exist $x_1, x_2, x_3 \in K$ so that $L/K$ is isomorphic to the
following extension:
\begin{eqnarray*}
y_1^p - y_1 &=& x_1;\\
y_2^p - y_2 &=& \frac{x_1^p + y_1^p - (x_1+y_1)^p}{p} + x_2; \\
y_3^p - y_3 &=& \frac{x_1^{p^2} + y_1^{p^2} - (x_1+y_1)^{p^2}}{p^2} +
\frac{x_2^p + y_2^p - (x_2 + y_2 + \frac{x_1^p + y_1^p - (x_1+y_1)^p}{p})^p}{p} + x_3.
\end{eqnarray*}
A generator $\sigma$ of the Galois group can be chosen so that its action is given by:
\begin{eqnarray*}
\sigma(y_1) &=& y_1 + 1; \\
\sigma(y_2) &=& y_2 + \frac{y_1^p + 1 - (y_1+1)^p}{p}; \\
\sigma(y_3) &=& y_3 + \frac{y_1^{p^2} + 1 - (y_1+1)^{p^2}}{p^2} +
\frac{y_2^p - (y_2 + \frac{y_1^p + 1 - (y_1+1)^p}{p})^p}{p}.
\end{eqnarray*}
\end{example}

The integral coefficients in Example \ref{Pp3} can be considered to be in $\FF_p \subset k$.

\begin{proof}
For the equations, it suffices to recursively compute $f_i=\overline{S}_{i-1}-y_i$ for $1 \leq i \leq 3$, 
starting with $S_0(x_1, y_1) = x_1 + y_1$ and $S_1(x_1, x_2, y_1, y_2) = x_2 + y_2 + (x_1^p +y_1^p - (x_1 + y_1)^p)/p$.
The Galois action is given by
$\sigma(y_i) = y_i + \tilde{f_i}$, where $\tilde{f_i} = f_i(y_1, \ldots, y_{i-1}, 1, 0, \ldots, 0)$.
To see this, note that $y_i^p = y_i + f_i$ and (\ref{EWitt}) imply that
$(y_1 + f_1, \ldots, y_n + f_n) = (y_1, \ldots, y_n) +' (x_1, \ldots, x_n)$.
Substituting $(1,0,\ldots, 0)$ for $(x_1, \ldots, x_n)$ yields
$(y_1 + \tilde{f_1}, \ldots, y_n + \tilde{f_n}) = (y_1, \ldots, y_n) +' (1, 0, \ldots, 0)$,
which equals $\sigma(y_1, \ldots, y_n)$ by Lemma \ref{Lgalois}.
\end{proof}

\begin{example}
When $p=2$ and $x=t^{-j}$, here are equations for a $\ZZ/8$-Galois extension of $k((t))$, 
which is defined over $\FF_2$ and has upper jumps $j$, $2j$, and $4j$:
$$y^2 - y = x; \ z^2 - z = xy; \ w^2 - w = x^3y+y^3x + xyz.$$
The Galois action is given by $y \mapsto y+1$, $z \mapsto z+y$, and $w \mapsto w + y^3 + y + yz$.
\end{example}

\bibliographystyle{abbrv}
\bibliography{pries_obus}

\end{document}